\newtheorem{theorem}{Theorem}[section]
\newtheorem{corollary}[theorem]{Corollary}
\newtheorem{proposition}[theorem]{Proposition}
\theoremstyle{definition}
\newtheorem{example}[theorem]{Example}
\theoremstyle{remark}
\newtheorem{remark}[theorem]{Remark}
\numberwithin{equation}{section}
\newcommand{\C}{\mathbb{C}}
\newcommand{\D}{\mathbb{D}}
\newcommand{\lD}{\lambda_{\D}}
\newcommand{\pD}{\partial_{\D}}
\newcommand{\p}{\partial}
\newcommand{\pzb}{\p\overline{z}}
\newcommand{\Om}{\Omega}
\newcommand{\la}{\lambda}
\newcommand{\Oml}{\Omega_\lambda}
\newcommand{\ov}{\overline}
\newcommand{\dt}{\mathbb{T}}
\begin{document}

\title[Hyperbolic curvature of holomorphic level curves]{Hyperbolic curvature of holomorphic level curves}

\author[Mihai Iancu]{Mihai Iancu}
\author[Veronica-Oana Nechita]{Veronica-Oana Nechita}
\address{Faculty of Mathematics and Computer Science,
Babe\c{s}-Bolyai University, \newline\indent  1 M. Kog\u{a}lniceanu Str., 400084
Cluj-Napoca, Romania} 
\email{mihai.iancu@ubbcluj.ro}
\email{veronica.nechita@ubbcluj.ro}


\subjclass[2020]{30F45, 30C45, 30C35, 52A10}

\date{\today}

\keywords{hyperbolic convexity, hyperbolic curvature, hyperbolic area, hyperbolic perimeter, level curve, radius of convexity.}

\begin{abstract}
We give sharp bounds for the hyperbolic curvature of the level curve $|z|=|f(z)|$, when $f:\D\to\D$ is holomorphic on the unit disc $\D$ and $f(0)\neq0$, as well as for other related level curves. As a consequence, we point out a rigidity theorem: if the hyperbolic curvature of the above level curve vanishes at some point, then the level curve is a hyperbolic geodesic and $f$ is an automorphism. As another  consequence, we prove that $\frac{1}{\sqrt 2}$ is the greatest lower bound of the supremum of $r\in(0,1)$ such that the level curve $|z|=r|f(z)|$ is (Euclidean) convex. This constant turns out to be also the radius of convexity for hyperbolically convex self-maps of $\D$ that fix the origin. We also give (sharp) estimates for the total hyperbolic curvature, hyperbolic area and hyperbolic perimeter of the sublevel sets.
\end{abstract}

\maketitle

\section{Introduction}
\label{sec1}

Let $\mathbb{D}$ be the unit disc in the complex plane $\mathbb{C}$.
 Solynin \cite{Sol} answered positively a question raised by Mej\'ia and Pommerenke \cite{MejPom} regarding the hyperbolic convexity (i.e., convexity with respect to hyperbolic geodesic segments) of the set
$$\Omega(f):=\{z\in\D:|z|<|f(z)|\},$$
when $f:\D\to\D$ is holomorphic with $f(0)\neq0$.
Recently, Efraimidis and Gumenyuk \cite{EfrGum} provided an alternative proof, by proving the hyperbolic convexity of the more general sets studied by Arango, Mej\'ia, Pommerenke \cite{AraMejPom}:
$$\Omega_\la(f):=\left\{z\in\D:\frac{1-|f(z)|^2}{1-|z|^2}<\la\right\},$$
 when $f:\D\to\D$ is holomorphic and $\la\ge1$, with $f(0)\neq0$ if $\la=1$. The study of $\Om(f)$ has interesting applications in Probability Theory (see \cite{JenPom, MejPom, MejPom2}) and in One-Parameter Semigroups Theory (see \cite{ElShSu}).

In this paper, we want to ``measure'' the hyperbolic convexity of these sets. Since the main instrument to do this is the hyperbolic curvature, we provide some bounds for the hyperbolic curvature of the level curves given by the  boundaries. Using some of the found estimates, we derive some  rigidity results for $\Omega_\la(f)$ and we find the radius of convexity $r\in(0,1)$ for $\Omega(rf)$. Moreover, we provide some estimates for the total hyperbolic curvature, hyperbolic area and hyperbolic perimeter of these sublevel sets.

Let's start by recalling the definition of the hyperbolic curvature in $\D$. Let the coefficients of the first fundamental form of an oriented regular surface $S$ be $E(u,v)=G(u,v)=\frac{1}{(1-u^2-v^2)^2}, F(u,v)=0$, for $(u,v)\in \mathbb{D}$, with respect to a local parametrization ${\rm x}: \mathbb{D}\to S$. Every $p\in {\rm x}(\D)$ is a hyperbolic point of $S$, since the Gaussian curvature $K$ of $S$ at $p$ is $-4$ (see, e.g., \cite[Section 4-3, Exercise 2]{doC}). $\lD:\D\to(0,\infty)$, $\lD(z)=\frac{1}{1-|z|^2}$, is the \textit{hyperbolic density} of $\D$. Let $\gamma$ be a regular (i.e., with everywhere nonvanishing tangent) $C^2$ curve in $\D$.
Let $k_h(z,\gamma)$ be the {\it  hyperbolic curvature} of $\gamma$ at $z\in\{\gamma\}$ which is given by (see \cite{FerGra, FlOsg, J, MaMin, Osg})
\begin{align}
\label{kh}
\begin{split}
    k_h(z,\gamma)&=\frac{1}{\lD(z)}\left(k_e(z,\gamma)-\frac{\partial \log \lD}{\partial n}(z)\right)\\
    &=(1-|z|^2)k_e(z,\gamma)-2{\rm Re}(n\ov{z}),
\end{split}
\end{align}
where $n=i\cdot\gamma'/|\gamma'|$ and $k_e(z,\gamma)$ is the signed Euclidean curvature of $\gamma$ at $z=\gamma(t)$. Note that $k_h(z,\gamma)$ is independent of the parametrization, it depends only on the range $\{\gamma\}$ and its orientation. Since $k_e(\gamma(t),\gamma) = \frac{{\rm Im}(\gamma''(t)\overline{\gamma'(t)})}{|\gamma'(t)|^3}$,
\begin{equation}
\label{kht}
    k_h(\gamma(t),\gamma)=\frac{1}{|\gamma'(t)|}{\rm Im}\left((1-|\gamma(t)|^2)\frac{\gamma''(t)}{\gamma'(t)}+2\overline{\gamma(t)}\gamma'(t)\right).
\end{equation}
Taking into account the expression of the {\it geodesic curvature} of ${\rm x}\circ\gamma$ in terms of the coefficients of the first fundamental form (see \cite[Section 4-4]{doC} and \cite{FlOsg}), $k_h(z,\gamma)$ coincides with the geodesic curvature of ${\rm x}\circ \gamma$ at ${\rm x}(z)$. Therefore, for every $\varphi\in{\rm Aut}(\D)$, since $\varphi$ is an isometry with respect to $\lD$, 
\begin{equation}
\label{inv}
    k_h(z,\gamma) = k_h(\varphi(z),\varphi\circ\gamma), 
\end{equation}
for all $z\in\{\gamma\}$ (see \cite{FlOsg, J, Osg}).

For every $\zeta\in\D$, let $\varphi_\zeta\in{\rm Aut}(\D)$ be given by
$$\varphi_\zeta(z)=\frac{\zeta-z}{1-\overline{\zeta}z},\quad z\in\D.$$ In view of \eqref{kh} and \eqref{inv}, for every $z\in\{\gamma\}$,
\begin{equation}
\label{khke}
k_h(z,\gamma)=k_e(0,\varphi_z\circ\gamma).    
\end{equation}

There is a connection between the geodesic curvature, Gaussian curvature and surface area, given by the (Local) Gauss-Bonnet Theorem (see \cite[Section 4-5]{doC}):  since $K\equiv-4$ on ${\rm x}(\D)$, if $\gamma$ is a regular Jordan $C^2$ curve in $\D$,  
\begin{equation}
    \label{GB}
    k_h(\gamma)-4 A_h(D_\gamma)=2\pi,
\end{equation}
where $D_\gamma$ is the interior of $\{\gamma\}$,  
$$A_h(D_\gamma)=\iint_{D_\gamma} \sqrt{EG-F^2}{\rm d}u{\rm d}v=\iint_{D_\gamma}\la_\D^2(z){\rm d}A(z)$$ is the {\it hyperbolic area} of $D_\gamma$ (with respect to the Lebesgue measure $A$ in $\D$) and 
$$k_h(\gamma)=\int_\gamma k_h(z,\gamma)\la_\D(z)|{\rm d}z|$$ 
is the {\it total hyperbolic curvature} of $\gamma$. Let us also mention the  \textit{hyperbolic isoperimetric inequality} 
\begin{equation}
    \label{isop}
    L_h^2(\gamma)\ge 4\pi A_h(D_\gamma)+4A_h^2(D_\gamma),
\end{equation}
with equality if and only if $\gamma$ is a (hyperbolic) circle, where 
$$L_h(\gamma)=\int_{\gamma} \la_\D(z)|{\rm d}z|$$ is the {\it hyperbolic perimeter} of $D_\gamma$ or length of $\gamma$ (see \cite{Oss}). Note that $k_h$, $A_h$ and $L_h$ are all invariant with respect to the automorphisms of $\D$.

We point out the following rigidity theorem. The proof will be given in Section \ref{sec3}, after we extract, in Section \ref{sec2}, a lower bound for the hyperbolic curvature from the proof of Efraimidis and Gumenyuk for \cite[Theorem 3.1]{EfrGum}.

\begin{theorem}
\label{t01}
    Let $f:\D\to\D$ be a holomorphic function such that $f(0)\neq0$.
    Then $k_h(\zeta,\pD\Omega(f))=0$ for some $\zeta\in\pD\Omega(f)$ if and only if $f\in{\rm Aut}(\D)$.
    Moreover, if one of the conditions holds, then
    $\pD\Omega(f)$ is a hyperbolic geodesic.
\end{theorem}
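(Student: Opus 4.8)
The plan is to prove the two implications of the equivalence separately and then read off the geodesic claim. Since $\Om(f)=\Om_1(f)$, the lower bound for the hyperbolic curvature produced in Section~\ref{sec2} applies with $\la=1$, and I expect it to be an inequality of the shape $k_h(\zeta,\pD\Om(f))\ge g(\zeta)\ge0$, in which $g(\zeta)$ vanishes precisely when equality holds in the Schwarz--Pick lemma at $\zeta$; this is the engine that drives the rigidity.

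For the implication $f\in{\rm Aut}(\D)\Rightarrow k_h\equiv0$, I would argue geometrically. Since $z\mapsto|z|$ is a strictly increasing function of the hyperbolic distance to the origin, the level set $\pD\Om(f)=\{z:|z|=|f(z)|\}$ consists of the points $z$ with $\rho(0,z)=\rho(0,f(z))$, where $\rho$ denotes the hyperbolic distance. When $f$ is an automorphism it is a $\rho$-isometry, so $\rho(0,f(z))=\rho(f^{-1}(0),z)$ and the level set becomes $\{z:\rho(0,z)=\rho(f^{-1}(0),z)\}$, the hyperbolic perpendicular bisector of the two distinct points $0$ and $f^{-1}(0)$ (distinct because $f(0)\neq0$). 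This bisector is a hyperbolic geodesic, so its hyperbolic (geodesic) curvature is identically zero; in particular $k_h(\zeta,\pD\Om(f))=0$ for every $\zeta$, and the ``Moreover'' claim is also established in this direction.

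For the converse, suppose $k_h(\zeta,\pD\Om(f))=0$ for some $\zeta\in\pD\Om(f)$. Feeding this into the Section~\ref{sec2} bound forces $g(\zeta)=0$, i.e.\ equality in the Schwarz--Pick inequality at $\zeta$. I would then invoke the rigidity of Schwarz--Pick: equality of $|f'(\zeta)|(1-|\zeta|^2)$ and $1-|f(\zeta)|^2$ at a single interior point forces $f$ to be a disc automorphism. Once $f\in{\rm Aut}(\D)$ is known, the previous paragraph shows $\pD\Om(f)$ is a geodesic, which completes both the equivalence and the final assertion.

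The main obstacle is entirely in the converse, and it lies in Section~\ref{sec2}: extracting a lower bound that is not merely $k_h\ge0$ (which only re-proves convexity) but is sharp enough that its vanishing pins down the Schwarz--Pick equality case. Concretely, writing the curve as the zero set of $P(z)=|f(z)|^2-|z|^2$ and expanding \eqref{kht} produces a term in $f''(\zeta)$; the delicate point is to dominate or eliminate this second-order term so that what survives is a manifestly nonnegative multiple of the first-order defect $\frac{1-|f(\zeta)|^2}{1-|\zeta|^2}-|f'(\zeta)|$, which at a boundary point reduces to $1-|f'(\zeta)|$. Verifying that the surviving expression vanishes if and only if $|f'(\zeta)|=1$ is precisely what links the geometric hypothesis $k_h=0$ to the analytic rigidity of Schwarz--Pick.
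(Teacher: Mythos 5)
Your proposal is correct, and its converse direction is the same as the paper's: specialize the Section~\ref{sec2} bound (Theorem~\ref{t0}) to $\la=1$, use $|\zeta|=|f(\zeta)|$ to reduce the numerator to $(1-|\zeta|)^2\bigl(1-|f'(\zeta)|^2\bigr)$, and conclude from the equality case of the Schwarz--Pick lemma that $f\in{\rm Aut}(\D)$. The ``main obstacle'' you describe in your last paragraph (controlling the second-order term so that the bound is sharper than mere nonnegativity) is precisely the content of Theorem~\ref{t0}, which is already available, so for this statement nothing remains to be done there. Where you genuinely diverge is the direction $f\in{\rm Aut}(\D)\Rightarrow k_h\equiv0$ together with the geodesic claim: the paper computes $\pD\Om(f)$ explicitly as a circular arc orthogonal to $\p\D$ (Example~\ref{ex1} with $\la=1$, via Remark~\ref{r1}), whereas you identify it intrinsically as the hyperbolic perpendicular bisector of $0$ and $f^{-1}(0)$, using that $|z|=|f(z)|$ is equivalent to $\rho(0,z)=\rho(0,f(z))=\rho(f^{-1}(0),z)$ for the hyperbolic distance $\rho$, and that $f^{-1}(0)\neq0$ since $f(0)\neq0$. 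Your bisector argument is coordinate-free and shorter for this one purpose; the paper's explicit parametrization of $\pD\Oml(f)$ for automorphisms is reused later in the equality-case analyses for $\la>1$ (Corollaries~\ref{ct1}--\ref{c4}), which the bisector description does not cover. Both routes are sound.
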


A few remarks are in order. By \cite[Proposition 3.3]{EfrGum}, $\Omega(f)\neq\D$. Moreover, by the Schwarz Lemma, $f(0)\neq0 \Leftrightarrow \Omega(f)\neq\emptyset$. So, $\pD\Omega(f)\neq\emptyset$, where $\pD$ is the boundary in $\D$. By \cite[Theorem 2.2]{MejPom}, $\pD\Omega(f)$ is either an analytic starlike (in particular, Jordan) curve or a union of open analytic arcs with endpoints on $\partial\D$, so, for every $\zeta\in\pD\Omega(f)$, by $k_h(\zeta,\pD\Omega(f))=0$ we mean $k_h(\zeta,\gamma)=0$, where $\gamma$ is a regular parametrization of an arc of $\pD\Omega(f)$ containing $\zeta$. Note that $k_h(\zeta,\pD\Omega(f))$ is uniquely determined up to sign, depending on the orientation of the parametrization. As a convention, we choose a parametrization $\gamma$ of the arc of $\pD\Omega(f)$ containing $\zeta$ such that the unit normal $n$ is inward-pointing. 

In Section \ref{sec4}, we give several lower bounds for the hyperbolic, respectively Euclidean, curvature of the level curve $\frac{\lambda_\D(z)}{\lambda_\D(f(z))}=\lambda$, for $f:\D\to\D$ holomorphic and $\la\ge1$, with $f(0)\neq0$ if $\la=1$, as consequences of the estimate given in Section \ref{sec2}. We consider for every lower bound the equality case. Let's mention such a result. 

\begin{theorem}
\label{t02}
         Let $f:\D\to\D$ be holomorphic and $\lambda>1$. Then, for every $\zeta\in\pD\Omega_\lambda(f)$,
     \begin{equation*}
         k_h(\zeta,\pD\Omega_\lambda(f))\ge \frac{\la(\la-1)(1-|\zeta|^2)}{|\overline{f'(\zeta)}f(\zeta)-\lambda\zeta|}.
     \end{equation*}
    Moreover, equality holds for some $\zeta\in\pD\Oml(f)$ if and only if $f\in{\rm Aut}(\D)$ with $\la<\frac{1+|f(0)|}{1-|f(0)|}$. Furthermore, if equality holds for some $\zeta\in\pD\Oml(f)$, then it holds for every $\zeta\in\pD\Oml(f)$.
\end{theorem}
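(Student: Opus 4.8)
The plan is to realise $\pD\Oml(f)$ as the regular level set $\{z\in\D:P(z)=0\}$ of the real-analytic function $P(z)=(1-|f(z)|^2)-\la(1-|z|^2)$, so that $\Oml(f)=\{P<0\}$. Wirtinger calculus gives $\partial_z P=\la\ov z-f'\,\ov f$ and $\partial_{\ov z}P=\ov{\partial_z P}=\la z-\ov{f'}f$, whence the denominator in the claim is $|\ov{f'(\zeta)}f(\zeta)-\la\zeta|=|\partial_{\ov z}P(\zeta)|=|\partial_z P(\zeta)|=\tfrac12|\nabla P(\zeta)|$. Since $\la>1$ the boundary is regular and, with the paper's convention, the inward unit normal is $n=-\partial_{\ov z}P/|\partial_z P|$. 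I would then compute $k_h$ from \eqref{kh} as $k_h=(1-|\zeta|^2)k_e-2\operatorname{Re}(n\ov\zeta)$, using the level-curve curvature $k_e=\left(|\partial_z P|^2\,\partial_z\partial_{\ov z}P-\operatorname{Re}\bigl(\partial_z^2P\cdot\ov{(\partial_z P)^2}\bigr)\right)/|\partial_z P|^3$ together with the second derivatives $\partial_z^2 P=-f''\,\ov f$ and $\partial_z\partial_{\ov z}P=\la-|f'|^2$.

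Clearing the positive denominator $|\partial_z P|^3$, the claimed inequality becomes a pointwise estimate in $\zeta,f(\zeta),f'(\zeta),f''(\zeta)$ which, on the curve, may be simplified using the defining relation $1-|f|^2=\la(1-|z|^2)$. The only second-order contribution is $(1-|\zeta|^2)\operatorname{Re}\bigl(f''\,\ov f\,\ov{(\partial_z P)^2}\bigr)$, whereas the target bound is free of $f''$; absorbing this term is the crux of the argument, and it is exactly what the lower bound extracted in Section~\ref{sec2} from the Efraimidis–Gumenyuk convexity proof supplies. I would therefore invoke that estimate to replace the $f''$-dependent expression by an $f''$-free minorant, reducing the statement to an inequality in $z,f,f'$ alone, which is then settled by the Schwarz–Pick inequality $|f'(\zeta)|\le\la$ (valid since $\frac{1-|f(\zeta)|^2}{1-|\zeta|^2}=\la$) and the curve relation. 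I expect this elimination of the second-order term to be the main obstacle; everything else is algebra.

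For the equality statement I would trace the rigidity. Equality forces equality in the Section~\ref{sec2} bound and in the Schwarz–Pick inequality, hence $f\in{\rm Aut}(\D)$; for such $f$ one has $\frac{1-|f(z)|^2}{1-|z|^2}=|f'(z)|=\frac{1-|f(0)|^2}{|1-\ov a z|^2}$ for a suitable $a\in\D$ with $|a|=|f(0)|$, whose range over $\D$ is the open interval $\bigl(\frac{1-|f(0)|}{1+|f(0)|},\frac{1+|f(0)|}{1-|f(0)|}\bigr)$. Consequently $\pD\Oml(f)\neq\emptyset$ precisely when $\la<\frac{1+|f(0)|}{1-|f(0)|}$, which is the stated restriction on $\la$: if $\la\ge\frac{1+|f(0)|}{1-|f(0)|}$ then $\Oml(f)=\D$ and $\pD\Oml(f)=\emptyset$, so no boundary point survives at which equality could be tested.

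Conversely, for $f\in{\rm Aut}(\D)$ with $\la$ in the admissible range the level set is the Euclidean circle $|1-\ov a z|^2=(1-|a|^2)/\la$, along which $f'$ and $f''$ are explicit; substituting these into the formula for $k_h$ from the first paragraph, I would verify that equality holds identically. This yields at once the converse and the propagation: equality at a single $\zeta$ forces $f\in{\rm Aut}(\D)$, and the explicit computation then gives equality at every point of $\pD\Oml(f)$. The invariance \eqref{inv} of $k_h$ may be used to place $\zeta$ conveniently (e.g.\ on the positive real axis) before the final substitution.
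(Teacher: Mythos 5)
Your argument is correct and follows essentially the same route as the paper: the inequality is obtained by invoking the Section~\ref{sec2} lower bound (Theorem~\ref{t0}) and then eliminating $f'$ via the Schwarz--Pick inequality $|f'(\zeta)|\le\la$ on the level curve, and the equality case is settled exactly as in the paper by Schwarz--Pick rigidity (forcing $f\in{\rm Aut}(\D)$, with $\la<\frac{1+|f(0)|}{1-|f(0)|}$ needed for $\pD\Oml(f)\neq\emptyset$) together with the explicit computation on the circle $|1-\ov a\zeta|^2=(1-|a|^2)/\la$. The implicit-function curvature setup in your first paragraph is redundant once you appeal to Theorem~\ref{t0}, but it does no harm.
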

Let $\la>1$. Then $0\in\Om_\la(f)$, and thus, by \cite[Proposition 3.3]{EfrGum}, $\pD\Om_\la(f)\neq\emptyset$ if and only if $\sup_{\D}|f'|>\la$ or $f$ is not a finite Blaschke product. Moreover, if $f\in{\rm Aut}(\D)$ and $\la\ge \frac{1+|f(0)|}{1-|f(0)|}$, then $\pD\Om_\la(f)=\emptyset$, by \cite[Example 3.2]{EfrGum}. In view of  \cite{AraMejPom}, $\Oml(f)$ is a starlike domain and $\pD\Om_\la(f)$, if non-empty, is a smooth arc around each of its points. Again, as a convention, we choose a parametrization
of an arc of $\pD\Oml(f)$ such that the unit normal $n$ is inward-pointing. 

In Section \ref{sec5}, we consider the special case of the Jordan level curves $\p\Om(rf)$, when $r\in(0,1)$, and, using a sharp inequality obtained by Ma and Minda \cite{MaMin} for hyperbolically convex functions in $\D$ and the fixed point function studied by Mej\'ia and Pommerenke \cite{MejPom}, we get the following sharp estimates. 

\begin{theorem}
\label{t03}
    Let $f:\D\to\ov{\D}$ be holomorphic such that $f(0)\neq0$. Then the following sharp inequalities hold, for all $r\in(0,1)$ and $\zeta\in\p\Om(rf)$,
    \begin{equation*}
    \frac{1+r}{1-r}C_{rf,\zeta}-\frac{2(1-r^2)}{r}\frac{1}{C_{rf,\zeta}} \ge k_h(\zeta,\p\Om(rf))\ge\frac{1-r}{1+r}C_{rf,\zeta}+\frac{2(1-r^2)}{r}\frac{1}{C_{rf,\zeta}},
    \end{equation*}
    where $C_{rf,\zeta}=|\ov{(rf)'(\zeta)}(rf)(\zeta)-\zeta|\frac{1-|\zeta|^2}{|\zeta|^2}$.
\end{theorem}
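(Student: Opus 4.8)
The plan is to first produce an exact expression for the hyperbolic curvature, isolating its single second-order term, and then to control that term sharply using the hyperbolic convexity of $\Om(rf)$ together with Ma--Minda's inequality, with sharpness supplied by the Mej\'ia--Pommerenke fixed-point function. Writing $g=rf$, the level curve $\p\Om(rf)$ is the implicit curve $u=0$ for $u(z)=|z|^2-|g(z)|^2$, and its complex gradient is $2u_{\ov z}=2(z-g(z)\ov{g'(z)})$. Hence the inward unit normal at $\zeta$ (pointing into $\Om(rf)=\{u<0\}$) is $n=-u_{\ov z}/|u_{\ov z}|=N/|N|$ with $N:=\ov{g'(\zeta)}g(\zeta)-\zeta$, and $C_{rf,\zeta}=|N|\frac{1-|\zeta|^2}{|\zeta|^2}$. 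Computing the Euclidean curvature of $\{u=0\}$ via Wirtinger derivatives ($u_{zz}=-g''\ov g$, $u_{z\ov z}=1-|g'|^2$) and substituting into \eqref{kh}, I would obtain, after using the boundary relation $|g(\zeta)|=|\zeta|$, the exact formula
\begin{equation*}
k_h(\zeta,\p\Om(rf))=(1-|\zeta|^2)\frac{{\rm Re}\big(g''(\zeta)\ov{g(\zeta)}N^2\big)}{|N|^3}+\frac{|\zeta|^2}{1-|\zeta|^2}C_{rf,\zeta}+\big(1-|g'(\zeta)|^2\big)\frac{1-|\zeta|^2}{|\zeta|^2}\frac{1}{C_{rf,\zeta}}.
\end{equation*}
Only the first summand involves $g''$; the other two are already written through $C_{rf,\zeta}$. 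Two elementary constraints will be used throughout: since $f$ maps into $\ov\D$, the Schwarz--Pick inequality forces $|g'(\zeta)|\le1$ on the boundary (where $|g(\zeta)|=|\zeta|$), and $|\zeta|=r|f(\zeta)|<r$, so $|\zeta|^2$ ranges in $(0,r^2)$.

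Next I would invoke that $\Om(rf)=\Om_1(rf)$ is hyperbolically convex (Efraimidis--Gumenyuk, $\la=1$, using $rf(0)\ne0$), so $\p\Om(rf)$ is an analytic Jordan curve and the Riemann map $\Phi\colon\D\to\Om(rf)$ is a hyperbolically convex function. Parametrizing $\p\Om(rf)$ by $\zeta=\Phi(w)$ with $w=e^{i\theta}$, the same curvature equals $\frac{1-|\Phi(w)|^2}{|\Phi'(w)|}{\rm Re}\big(1+\frac{w\Phi''(w)}{\Phi'(w)}+\frac{2w\ov{\Phi(w)}\Phi'(w)}{1-|\Phi(w)|^2}\big)$, the geodesic curvature of the image of the unit circle. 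Ma--Minda's sharp inequality for hyperbolically convex functions bounds this quantity two-sidedly, and comparing it with the exact formula above turns these into sharp bounds on the second-order term. Inserting the ranges $|\zeta|<r$ and $|g'(\zeta)|\le1$ and optimizing the combination of the three terms over the admissible configurations is what eliminates $|\zeta|$ and $|g'(\zeta)|$ and leaves precisely the functions $\frac{1\pm r}{1\mp r}C_{rf,\zeta}\mp\frac{2(1-r^2)}{r}\frac{1}{C_{rf,\zeta}}$ of $r$ and $C_{rf,\zeta}$.

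Finally, sharpness is established by exhibiting the equality cases: I would take $f$ to be an automorphism conjugate of the Mej\'ia--Pommerenke fixed-point function, for which $\Om(rf)$ is a hyperbolic disc and $\Phi$ realizes equality in Ma--Minda's inequality, so each bound is attained. The main obstacle is exactly the matching step in the middle paragraph: extracting from Ma--Minda's inequality the sharp control of the $g''$-term and then showing that, once the admissible ranges of $|\zeta|$ and $|g'(\zeta)|$ are inserted, the three summands of the exact formula recombine into expressions depending only on $r$ and $C_{rf,\zeta}$. This is where the precise extremal function and Ma--Minda's inequality must be used in tandem, and where the joint extremization over all admissible $f$ with $C_{rf,\zeta}$ fixed is genuinely delicate.
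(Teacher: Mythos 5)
There is a genuine gap, and it sits exactly where you locate your ``main obstacle''. The sharp constants $\frac{1\pm r}{1\mp r}$ and $\frac{2(1-r^2)}{r}$ cannot be extracted by applying Ma--Minda's inequality to the Riemann map $\Phi$ of $\Om(rf)$ at boundary points $w=e^{i\theta}$: the inequality \cite[Theorem 5]{MaMin} controls $p(w)-\frac{1+|w|^2}{1-|w|^2}$ by $\frac{2|w|}{1-|w|^2}\bigl(1-|D_{h1}\Phi(w)|^2\bigr)$, which degenerates as $|w|\to1$ (the main term blows up while the subtracted term vanishes), so at the boundary it yields nothing beyond qualitative convexity statements and certainly no $r$-dependence. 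The paper's proof instead uses the Mej\'ia--Pommerenke fixed point function $\psi$, i.e.\ the Riemann map of the \emph{full} domain $\Om(f)$ normalized by $\psi(0)=0$, $\psi'(0)=f(0)$, which satisfies $wf(\psi(w))=\psi(w)$ and hence $\Om(rf)=\psi(r\D)$. Thus $\p\Om(rf)$ is the image of the \emph{interior} circle $|w|=r$ under the hyperbolically convex map $\psi$; one has the exact identities $k_h(\zeta,\p\Om(rf))=C_{rf,\zeta}\,{\rm Re}\,p(w)$ and $C_{rf,\zeta}=\frac{1-|\psi(w)|^2}{r|\psi'(w)|}$ (the latter from \cite[(3.3)]{MejPom}), and Ma--Minda applied at $|w|=r$ gives precisely $\bigl|{\rm Re}\,p(w)-\frac{1+r^2}{1-r^2}\bigr|\le\frac{2r}{1-r^2}-\frac{2(1-r^2)}{rC_{rf,\zeta}^2}$; multiplying by $C_{rf,\zeta}$ is the whole theorem. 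In your plan the fixed point function appears only in the sharpness discussion, but it is the structural ingredient that makes the estimate possible at all.

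Your fallback --- inserting the ranges $|\zeta|\le r$ and $|g'(\zeta)|\le1$ into the exact implicit-curve formula and ``optimizing over admissible configurations'' --- would not close the gap either: the three summands are coupled through $f$ (at the extremal $f\equiv\sigma\in\dt$ one has $|\zeta|=r$ and $g'\equiv0$ simultaneously, so your stated range $|\zeta|<r$ even excludes the equality case), and extremizing them separately cannot produce a bound depending only on $r$ and $C_{rf,\zeta}$, let alone a sharp one. The exact curvature formula in your first paragraph is fine (it is essentially the computation behind Theorem \ref{t0}), but the matching step in your second paragraph is not a proof strategy that can be completed as stated.
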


As an application of Theorem \ref{t03} and its proof, in Section \ref{sec6}, we find the greatest lower bound of the supremum of $r\in(0,1)$ such that $\Om(rf)$ is (Euclidean) convex.

\begin{theorem}
\label{t04}
    Let  $\mathcal{F}=\{f:\D\to\ov{\D}: f \text{ is holomorphic with }f(0)\neq0\}$. Then 
    $$\inf_{f\in\mathcal{F}}\sup\{r\in(0,1):\Om(rf)\mbox{ is convex}\}=\frac{1}{\sqrt{2}}.$$
\end{theorem}

The proof of Theorem \ref{t04} also gives the radius of (Euclidean) convexity for the family of univalent self-maps of $\D$ that fix the origin and have hyperbolically convex image.

In view of \eqref{GB} and \eqref{isop}, it is natural to take a look at the estimates of the total hyperbolic curvature, hyperbolic area and hyperbolic perimeter of the corresponding sublevel sets. In Section \ref{sec7}, we point out some estimates, taking into account the results of Kourou \cite{Kou1, Kou2}. Let's mention such a result.

\begin{theorem}
\label{t05}
    Let $f:\D\to\ov{\D}$ be holomorphic such that $f(0)\neq0$ and $r\in(0,1)$. Then 
    $$\frac{\pi(1-r)r^2|f(0)|^2}{(1+r)((1+r)^2-4r|f(0)|^2)}\le A_h(\Om(rf))\le \frac{\pi r^2|f(0)|^2}{1-r^2}.$$
    Moreover, for each inequality, the equality holds if and only if  $f\equiv\sigma\in\dt$.
\end{theorem}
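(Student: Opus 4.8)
The plan is to read off the single conformal invariant of $\Om(rf)$ that carries the dependence on $|f(0)|$, and then to integrate sharp distortion bounds for hyperbolically convex maps. First I would record the structure of the domain. Since $|f|\le 1$ and $r<1$, we have $\Om(rf)=\{z\in\D:|z|<r|f(z)|\}\subseteq\{|z|<r\}$, so $\ov{\Om(rf)}$ is compact in $\D$; as $f(0)\neq0$ we get $0\in\Om(rf)$, and by \cite[Theorem 2.2]{MejPom} the boundary is an analytic Jordan curve. Thus $\Om(rf)$ is a hyperbolically convex analytic Jordan domain containing the origin. The key observation is that $z\mapsto\log\frac{r|f(z)|}{|z|}$ is nonnegative on $\Om(rf)$, vanishes on $\pD\Om(rf)$, is harmonic on $\Om(rf)\setminus\{0\}$ (where $f\neq0$), and behaves like $-\log|z|$ at the origin; hence it is the Green function of $\Om(rf)$ with pole at $0$, and its harmonic part at $0$ equals $\log(r|f(0)|)$. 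Consequently the conformal radius of $\Om(rf)$ at $0$ is $r|f(0)|$, so the Riemann map $\psi\colon\D\to\Om(rf)$ with $\psi(0)=0$ satisfies $|\psi'(0)|=r|f(0)|$.

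Next I would transfer the area to the disc. With $z=\psi(w)$,
\begin{equation*}
A_h(\Om(rf))=\iint_{\Om(rf)}\lD^2(z)\,{\rm d}A(z)=\iint_\D\big(\psi^\ast(w)\big)^2\,\lD^2(w)\,{\rm d}A(w),\qquad \psi^\ast(w)=\frac{(1-|w|^2)\,|\psi'(w)|}{1-|\psi(w)|^2},
\end{equation*}
so that the normalization reads $\psi^\ast(0)=|\psi'(0)|=r|f(0)|$. Because $\Om(rf)$ is hyperbolically convex, $\psi$ is a hyperbolically convex function, and I would feed the sharp two--sided estimates for the hyperbolic distortion $\psi^\ast$ of such functions (the Ma--Minda inequalities \cite{MaMin} that already power Theorem \ref{t03}) into this integral, in the normalization fixed by $\psi^\ast(0)=r|f(0)|$ together with the containment $|\psi|<r$. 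This yields radial functions with $m(|w|)\le\psi^\ast(w)\le M(|w|)$; integrating $m^2\lD^2$ and $M^2\lD^2$ over $\D$ gives the lower and upper bounds, and the closed forms arise by identifying the extremal hyperbolically convex maps (the Mej\'ia--Pommerenke fixed--point function used in Section \ref{sec5}) and evaluating the resulting one--dimensional integrals, for which I would lean on the hyperbolic--area computations of Kourou \cite{Kou1,Kou2}.

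For the equality statements I would argue that equality in either estimate forces the corresponding pointwise distortion bound to hold for almost every $w$, i.e. $\psi^\ast(w)=M(|w|)$ (resp. $=m(|w|)$) throughout $\D$. Since the extremal in the Ma--Minda estimate at radius $|w|$ is a fixed rotation of a single function, attaining it simultaneously at all $w$ forces $\psi^\ast$, and hence $\Om(rf)$, to be rotationally symmetric about $0$; combined with $|\psi'(0)|=r|f(0)|$ and $\Om(rf)\subseteq\{|z|<r\}$ this is possible only when $\psi(w)=r\sigma w$, i.e. $\Om(rf)=\{|z|<r\}$. The latter forces $|f|\equiv1$ on $\{|z|=r\}$, so by the maximum principle (using $|f|\le1$) $f\equiv\sigma\in\dt$. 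Conversely, if $f\equiv\sigma$ then $\Om(rf)=\{|z|<r\}$ and
\begin{equation*}
A_h(\{|z|<r\})=2\pi\int_0^r\frac{\rho\,{\rm d}\rho}{(1-\rho^2)^2}=\frac{\pi r^2}{1-r^2},
\end{equation*}
which equals both bounds when $|f(0)|=1$, confirming equality.

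The main obstacle is the distortion input and its integration: one must determine the sharp radial majorant $M$ and minorant $m$ of $\psi^\ast$ over all hyperbolically convex maps with $\psi^\ast(0)=r|f(0)|$ and $|\psi|<r$, and then evaluate $2\pi\int_0^1\frac{M(\rho)^2\,\rho}{(1-\rho^2)^2}\,{\rm d}\rho$ and $2\pi\int_0^1\frac{m(\rho)^2\,\rho}{(1-\rho^2)^2}\,{\rm d}\rho$ so that they collapse exactly to $\frac{\pi r^2|f(0)|^2}{1-r^2}$ and $\frac{\pi(1-r)r^2|f(0)|^2}{(1+r)((1+r)^2-4r|f(0)|^2)}$. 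The extremal distortions here are furnished by the Mej\'ia--Pommerenke fixed--point function, so these integrals (handled via Kourou \cite{Kou1,Kou2}) are the genuinely nontrivial step. The reason equality nonetheless localizes at $f\equiv\sigma$ is that attaining an \emph{integrated} distortion bound forces pointwise extremality at every radius, which is compatible with a single map only for the rotationally symmetric disc; this is exactly what the identity $A_h(\Om(rf))=\iint_\D(\psi^\ast)^2\lD^2\,{\rm d}A$ makes transparent.
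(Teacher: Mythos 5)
Your structural observations are sound (the Green-function computation showing the conformal radius of $\Om(rf)$ at $0$ is $r|f(0)|$, the identity $A_h(\Om(rf))=\iint_\D(\psi^\ast)^2\lD^2\,{\rm d}A$, and the endgame that $\Om(rf)=r\D$ forces $f\equiv\sigma$ via the maximum principle), but the central quantitative step --- integrating pointwise Ma--Minda distortion bounds over the whole disc --- does not produce the stated constants, and in one direction fails outright. For the upper bound, the sharp Ma--Minda majorant of the hyperbolic distortion, $M(\rho)=D_{h1}k_\alpha(\rho)=\alpha(1+\rho)/\sqrt{(1-\rho)^2+4\alpha^2\rho}$, tends to $1$ as $\rho\to1$ (its extremal $k_\alpha$ maps onto a domain touching $\partial\D$), so $\int_0^1 M(\rho)^2\rho(1-\rho^2)^{-2}\,{\rm d}\rho=\infty$: the majorant is not integrable against $\lD^2$, and the constrained extremal problem you would need (imposing $|\psi|<r$ as well) is exactly the hard step you leave unresolved. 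For the lower bound, the minorant $m(\rho)=D_{h1}k_\alpha(-\rho)$ is integrable, but even in the extremal case $f\equiv\sigma$ (where $\psi(w)=r\sigma w$ and $A_h=\pi r^2/(1-r^2)$) one has $\psi^\ast(\rho)>m(\rho)$ strictly near $\rho=1$, so $\iint m^2\lD^2$ is \emph{strictly less} than the claimed lower bound at the very point where equality must hold. Hence the integrals cannot ``collapse exactly'' to the two displayed expressions, and the equality analysis built on pointwise extremality at every radius collapses with them.

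The paper's route avoids both problems. It works with the Mej\'ia--Pommerenke fixed-point function $\psi:\D\to\Om(f)$, normalized by $\psi'(0)=f(0)$ (not $r|f(0)|$), so that $\Om(rf)=\psi(r\D)$ is the image of the \emph{sub-disc} $r\D$. The upper bound is then an instance of Kourou's integrated monotonicity results \cite[Corollaries 1.2--1.4]{Kou2} for hyperbolically convex $\psi$, which bound $A_h(\psi(r\D))$ by $|\psi'(0)|^2 A_h(r\D)=\pi r^2|f(0)|^2/(1-r^2)$ directly, with equality characterization built in (Theorem \ref{tAh1}). The lower bound is not obtained from the area integral at all: one first bounds the hyperbolic perimeter from below by applying the Ma--Minda distortion theorem \cite[Corollary, p.~92]{MaMin} only on the single circle $|w|=r$ (Proposition \ref{Lh_low}), where a pointwise bound at a fixed radius is legitimately sharp, and then converts perimeter into area via the isoperimetric-type inequality $L_h^2(\p\Om(rf))\le\frac{4\pi}{1-r^2}A_h(\Om(rf))$ from \eqref{ah22} (Corollary \ref{Ah_low}). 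If you want to salvage your approach, you would need either these monotonicity/isoperimetric inputs or a genuinely new solution of the distortion problem constrained by $|\psi|<r$; as written, the proposal has a gap precisely where the theorem's content lies.
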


This result combined with \eqref{GB} provides sharp bounds for the total hyperbolic curvature $k_h(\p\Om(rf))$. We also give sharp bounds for $L_h(\p\Om(rf))$, sharp hyperbolic isoperimetric-type inequalities for $\p\Om(rf)$, and some bounds for $A_h(\Om_\la(f))$ and $L_h(\pD\Om_\la(f))$, which, however, are not sharp. 

\section{A lower bound for the hyperbolic curvature of level curves}
\label{sec2}

Following \cite{EfrGum} (see also \cite{AraMejPom}), we consider
$$\Omega_\lambda(f)=\{z\in\D:u(z):=|f(z)|^2-\lambda|z|^2+\lambda-1>0\},$$
for $f:\D\to\D$ holomorphic and $\lambda>0$.

\begin{remark}
$$\Omega_\lambda(f)=\left\{z\in\D:\frac{\lambda_\D(z)}{\lambda_\D(f(z))}=\frac{1-|f(z)|^2}{1-|z|^2}<\lambda\right\}.$$
    Also, $\Omega_1(f)=\Omega(f)=\{z\in\D:|z|<|f(z)|\}$.

\end{remark}

\begin{remark}
\label{r_grad}
    Let $\lambda\ge1$ and $f:\D\to\D$ be holomorphic, with $f(0)\neq0$ if $\la=1$. By the first part of the proof of \cite[Theorem 3.1]{EfrGum}, 
    \begin{equation}
    \label{grad}
        \nabla u(\zeta)=2\frac{\partial u}{\partial \overline{z}}(\zeta)=2(\overline{f'(\zeta)}f(\zeta)-\lambda\zeta)\neq0,
    \end{equation}
    for every $\zeta\in\pD\Omega_\lambda(f)$, if $\lambda>1$ or if $\lambda=1$ and $f\notin{\rm Aut}(\D)$. However, if $\lambda=1$ and $f\in{\rm Aut}(\D)$, then $f=e^{i\theta}\varphi_a$, for some $a\in\D\setminus\{0\}$ and $\theta\in\mathbb{R}$, and thus $\nabla u(\zeta)=0$, for some $\zeta\in\pD\Oml(f)$, if and only if $a(1-\ov{a}\zeta)=\ov{a}\zeta(a-\zeta)$, which is in contradiction with $|f(\zeta)|=|\zeta|<1$. So, \eqref{grad} holds, and thus $\pD\Oml(f)$ is a smooth arc around each $z\in\pD\Oml(f)$, for every $f:\D\to\D$ holomorphic with $f(0)\neq0$, when $\lambda=1$, and for every $f:\D\to\D$ holomorphic, when $\la>1$ (cf. \cite[Section 3]{AraMejPom},\cite[Section 2]{MejPom}).
\end{remark}

In the following, we are going to reveal the expression of the hyperbolic curvature in the proof of Efraimidis and Gumenyuk \cite[Theorem 3.1]{EfrGum}. 

\begin{theorem}
\label{t0}
     Let $\lambda\ge1$ and $f:\D\to\D$ be holomorphic, with $f(0)\neq0$ if $\la=1$. Then, for every $\zeta\in\pD\Omega_\lambda(f)$,
    $$k_h(\zeta,\pD\Omega_\lambda(f))\ge \frac{\lambda^2\left(1+|\zeta|^2-2|f(\zeta)|\right)-|f'(\zeta)|^2(1-|f(\zeta)|)^2}{\la|\overline{f'(\zeta)}f(\zeta)-\lambda\zeta|}.$$
    Moreover, if equality holds for some $\zeta\in\pD\Oml(f)$, then $f$ is a Blaschke product of degree at most $2$.
\end{theorem}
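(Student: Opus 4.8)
The plan is to first derive an \emph{exact} expression for $k_h(\zeta,\pD\Oml(f))$ as a function of $f(\zeta),f'(\zeta),f''(\zeta)$, and then to eliminate the dependence on $f''(\zeta)$ by a sharp second-order Schwarz--Pick estimate. Since $\pD\Oml(f)$ is the level set $\{u=0\}$ and, by Remark \ref{r_grad}, $\nabla u(\zeta)\neq0$, near $\zeta$ the curve is smooth and the inward unit normal (in the chosen convention) is, as a complex number, $n=\nabla u/|\nabla u|=u_{\bar z}/|u_z|$, using $\nabla u=2u_{\bar z}$ and $|\nabla u|=2|u_z|$. Feeding $n$ and the implicit-curve formula $k_e=\big({\rm Re}(u_{zz}\ov{u_z}^2)-u_{z\bar z}|u_z|^2\big)/|u_z|^3$ into \eqref{kh} gives
$$k_h(\zeta,\pD\Oml(f))=\frac{(1-|\zeta|^2)\big({\rm Re}(u_{zz}\ov{u_z}^2)-u_{z\bar z}|u_z|^2\big)}{|u_z|^3}-\frac{2{\rm Re}(u_z\zeta)}{|u_z|}.$$
A Wirtinger computation yields $u_z=\ov f f'-\la\ov\zeta$ (so $|u_z|=|\ov{f'}f-\la\zeta|$ is the denominator in the statement), $u_{z\bar z}=|f'|^2-\la$, and, crucially, $u_{zz}=\ov f\,f''$. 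Thus $f''(\zeta)$ enters $k_h$ \emph{only} through the term ${\rm Re}(u_{zz}\ov{u_z}^2)={\rm Re}\big(f''\cdot\ov f\,\ov{u_z}^2\big)$, and it does so real-linearly with the positive coefficient $(1-|\zeta|^2)/|u_z|^3$.

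Next I would pin down the admissible range of $f''(\zeta)$. Form $g=\varphi_{f(\zeta)}\circ f\circ\varphi_\zeta$, a holomorphic self-map of $\D$ with $g(0)=0$. Applying the Schwarz--Pick lemma to $g(w)/w$ gives the sharp coefficient bound $|g''(0)/2|\le 1-|g'(0)|^2$, with equality exactly when $g(w)/w$ is a disc automorphism (a unimodular constant being allowed), i.e.\ when $g$ is a Blaschke product of degree at most $2$. Computing $g'(0)$ and $g''(0)$ by the chain rule (from $\varphi_\zeta'(0)=-(1-|\zeta|^2)$, $\varphi_\zeta''(0)=-2\ov\zeta(1-|\zeta|^2)$, and the analogous values at $f(\zeta)$) turns this into a disc constraint $|f''(\zeta)-M|\le R$. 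On the level curve the identity $1-|f(\zeta)|^2=\la(1-|\zeta|^2)$ then simplifies the parameters to $M=-2f'(\zeta)u_z/(\la(1-|\zeta|^2))$ and $R=2(\la^2-|f'(\zeta)|^2)/(\la(1-|\zeta|^2))$; note $R\ge0$, since $|g'(0)|\le1$ forces $|f'(\zeta)|\le\la$ on the boundary.

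Finally, as $k_h$ is increasing in ${\rm Re}(f''\,\ov f\,\ov{u_z}^2)$, I minimize this real-linear functional over the disc $|f''-M|\le R$, obtaining
$${\rm Re}(f''\,\ov f\,\ov{u_z}^2)\ge {\rm Re}(M\,\ov f\,\ov{u_z}^2)-R|f|\,|u_z|^2.$$
Substituting into the exact formula and simplifying via $1-|f(\zeta)|^2=\la(1-|\zeta|^2)$ — where the ${\rm Re}(\zeta f'\ov f)$ contributions cancel and the $|f'|^2$ terms collapse through $2|f|(1-|f|)-(1-|f|^2)=-(1-|f|)^2$ — should reproduce exactly the asserted lower bound. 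Equality in the minimization forces $f''(\zeta)$ to the point with $|f''-M|=R$, i.e.\ equality in the coefficient bound, whence $g$, and so $f=\varphi_{f(\zeta)}\circ g\circ\varphi_\zeta$, is a Blaschke product of degree at most $2$.

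I expect the main obstacle to be twofold: deriving and correctly simplifying the second-order Schwarz--Pick disc constraint (the chain-rule bookkeeping for $M$ and $R$ is delicate), and verifying the algebraic collapse in the last step, which hinges essentially on the boundary relation $1-|f|^2=\la(1-|\zeta|^2)$. A minor point to treat separately is the degenerate case $f(\zeta)=0$, where the $f''$-term vanishes identically and the bound must be checked directly.
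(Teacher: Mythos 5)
Your derivation of the main inequality is correct and is essentially the paper's argument in different clothing: the paper also reduces everything to the second--order Schwarz--Pick coefficient bound $|c_1|\le 1-|c_0|^2$ for the conjugated map $h=\varphi_{f(\zeta)}\circ f\circ\varphi_{\zeta}$ (it just extracts the curvature by differentiating $u\circ\gamma=0$ twice and quotes the resulting estimate from Efraimidis--Gumenyuk, whereas you compute $k_h$ from the implicit--curve formula and minimize the real--linear functional ${\rm Re}\big(f''\,\ov f\,\ov{u_z}^2\big)$ explicitly over the Schwarz--Pick disc $|f''-M|\le R$). I checked your values of $u_z,u_{z\bar z},u_{zz}$, of $M$ and $R$, and the final cancellations (the ${\rm Re}(\zeta f'\ov f)$ terms cancel against $-2{\rm Re}(u_z\zeta)/|u_z|$, and the $|f'|^2$ terms collapse to $-(1-|f|)^2|f'|^2/\la$); the bound comes out exactly as stated. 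Your formulation as a minimization also makes the structure of the proof more transparent than the paper's.

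The one genuine problem is the case $f(\zeta)=0$, which you dismiss as ``a minor point to treat separately.'' It is not minor, and it is not merely a matter of ``checking the bound directly'': when $f(\zeta)=0$ the functional ${\rm Re}\big(f''\,\ov f\,\ov{u_z}^2\big)$ vanishes identically, so equality in the displayed bound holds \emph{automatically}, for every admissible $f''(\zeta)$, and your equality analysis (``equality forces $|f''-M|=R$'') breaks down completely --- no Blaschke conclusion can be extracted. Indeed the ``Moreover'' clause appears to fail there: take $f(z)=\tfrac{1}{3}(z-\tfrac12)(1+z)$, $\la=\tfrac43$, $\zeta=\tfrac12$; then $\zeta\in\pD\Oml(f)$, one computes $k_h(\zeta,\pD\Oml(f))=\tfrac{71}{32}$, which equals the right--hand side of the stated inequality, yet $f$ is not a Blaschke product. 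You should be aware that the paper's own proof has exactly the same blind spot: in its intermediate estimate the coefficient of $|c_1|$ is $2|f(\zeta)|$, so when $f(\zeta)=0$ equality in the final bound does not force $|c_1|=1-|c_0|^2$ either. So for $f(\zeta)\neq0$ your equality argument is fine and matches the paper's; for $f(\zeta)=0$ neither argument (nor, apparently, the statement itself) survives, and the ``Moreover'' part should be restricted to points with $f(\zeta)\neq0$.
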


\begin{proof}
Let $\zeta\in\pD\Omega_\lambda(f)$ and $\gamma$ be a local regular parametrization of $\pD\Omega_\lambda(f)$ around $\gamma(0)=\zeta$ such that $\gamma'(0)=-i\frac{\partial u}{\partial \overline{z}}(\zeta)/\left|\frac{\partial u}{\partial \overline{z}}(\zeta)\right|$, using, in view of Remark \ref{r_grad}, $\frac{\partial u}{\partial \overline{z}}(\zeta)\neq0$. Moreover, the unit normal vector $n=\nabla u(\zeta)/|\nabla u(\zeta)|=i\gamma'(0)$ of $\gamma$ at $\zeta$ is inward-pointing with respect to $\Oml(f)$. In view of \eqref{kht},
\begin{align*}
k_h(\zeta,\pD\Oml(f))&=k_h(\zeta,\gamma)  \\
    & = \frac{1-|\zeta|^2}{|\frac{\p u}{\pzb}(\zeta)|}{\rm Re}\left(\gamma''(0)\frac{\p u}{\p z}(\zeta)-\frac{2\ov{\zeta}}{1-|\zeta|^2}\frac{\p u}{\pzb}(\zeta)\right).
\end{align*}

Since $u\circ\gamma=0$, $(u\circ\gamma)''(0)=0$ and thus 
$${\rm Re}\left(\frac{\p^2u}{\p z^2}(\zeta)(\gamma'(0))^2\right)+\frac{\p^2u}{\p z\pzb}(\zeta)|\gamma'(0)|^2+{\rm Re}\left(\frac{\p u}{\p z}(\zeta)\gamma''(0)\right)=0.$$
So,
\begin{align*}
k_h(\zeta,\gamma)  =& -\frac{1-|\zeta|^2}{|\frac{\p u}{\pzb}(\zeta)|}\left({\rm Re}\left(\frac{\p^2u}{\p z^2}(\zeta)(\gamma'(0))^2\right)\right.\\
&\left.+\frac{\p^2u}{\p z\pzb}(\zeta)+\frac{2}{1-|\zeta|^2}{\rm Re}\left(\ov{\zeta}\frac{\p u}{\pzb}(\zeta)\right)\right).
\end{align*}

Since,
$$\varphi_{-\zeta}'(0)=|\zeta|^2-1\quad\mbox{ and }\quad \varphi_{-\zeta}''(0)=-2\ov{\zeta}(|\zeta|^2-1)$$
and $\frac{\p u}{\p z}(\zeta)\big(\gamma'(0)\big)^2=-\frac{\p u}{\pzb}(\zeta)$, we have
\begin{align*}
 k_h(\zeta,\gamma) & = -\frac{1}{(1-|\zeta|^2)|\frac{\p u}{\pzb}(\zeta)|}\left({\rm Re}\left(\frac{\p^2u}{\p z^2}(\zeta)(-\varphi_{-\zeta}'(0)\gamma'(0))^2\right)\right.\\
 &\left.+\frac{\p^2u}{\p z\pzb}(\zeta)|-\varphi_{-\zeta}'(0)\gamma'(0)|^2+{\rm Re}\left(\frac{\p u}{\p z}(\zeta)(-\varphi_{-\zeta}''(0))(\gamma'(0))^2\right)\right)\\
 & =  -\frac{1}{(1-|\zeta|^2)|\frac{\p u}{\pzb}(\zeta)|} \frac{v''(0)}{2},
\end{align*}
where $v(t)=u(-\varphi_{-\zeta}(tk))$ and $k=\gamma'(0)$.

In view of \eqref{inv}, for $\alpha\in\mathbb{R}$,
$$k_h(\zeta,\gamma)=k_h(\zeta_\alpha,\gamma_\alpha),$$
where $\zeta_\alpha=e^{-i\alpha}\zeta$ and $\gamma_\alpha=e^{-i\alpha}\gamma$ is a parametrization around $\zeta_\alpha$ of $\pD\Oml(f_\alpha)$ $=e^{-i\alpha}\pD\Oml(f)$, $f_\alpha(z)=f(e^{i\alpha}z)$. Moreover, for $\alpha,\beta\in\mathbb{R}$, since $\Oml(f_\alpha)=\Oml(f_{\alpha,\beta})$, where $f_{\alpha,\beta}=e^{i\beta}f_{\alpha}$, we can choose $\alpha$ and $\beta$ such that $\zeta_\alpha=|\zeta|$ and $f_{\alpha,\beta}(\zeta_\alpha)=|f(\zeta)|$.
Let  $u_{\alpha,\beta}(z)=|f_{\alpha,\beta}(z)|^2-\la|z|^2+\la-1$, $z\in\D$. Note that $f_{\alpha,\beta}'(\zeta_\alpha)=e^{i(\alpha+\beta)}f'(\zeta)$ and $\frac{\p u_{\alpha,\beta}}{\pzb}(\zeta_\alpha)=e^{-i\alpha}\frac{\p u}{\pzb}(\zeta)$. In view of the proof of \cite[Theorem 3.1]{EfrGum},
$$\frac{v_{\alpha,\beta}''(0)}{2}\le \la(1-\zeta_\alpha^2)\left[(1-\zeta_\alpha^2)(\la|c_0|^2-1)+2b^2|c_0|^2-2\zeta_\alpha^2+2b|c_1|\right],$$
where $v_{\alpha,\beta}(t)=u_{\alpha,\beta}(-\varphi_{-\zeta_\alpha}(tk_\alpha))$, $k_\alpha=e^{-i\alpha}k$, $b=f_{\alpha,\beta}(\zeta_\alpha)$ and $c_0=h'(0)$, $c_1=h''(0)/2$, $h=\varphi_b\circ f_{\alpha,\beta}\circ(-\varphi_{-\zeta_\alpha})$. By the Schwarz Lemma applied to $h$, $|c_0|\le 1$. By the Schwarz-Pick Lemma applied to $z\mapsto h(z)/z$, $|c_1|\le 1-|c_0|^2$ and, if equality holds, then either $f\in{\rm Aut}(\D)$, in the case $|c_0|=1$, or $f$ is a Blaschke product of degree 2, in the case $|c_0|<1$. Also, we observe that $c_0=-\frac{f_{\alpha,\beta}'(\zeta_\alpha)}{\la}$.

Taking into account the above and using again $(1-|\zeta|^2)\la=1-|f(\zeta)|^2$, we get
$$k_h(\zeta,\gamma)\ge \frac{\lambda(1-|\zeta|^2)\left[1+|\zeta|^2-2|f(\zeta)|-\frac{|f'(\zeta)|^2}{\la^2}(1-|f(\zeta)|)^2\right]}{(1-|\zeta|^2)|\frac{\p u}{\pzb}(\zeta)|}.$$ So, the desired inequality holds.
\end{proof}

\begin{remark}
\label{Dhj}
    The coefficients $c_0$ and $c_1$ in the proof of Theorem \ref{t0} satisfy
    $$|c_0|=|D_{h1}f(\zeta)|\quad\text{ and }\quad 2|c_1|=|D_{h2}f(\zeta)|,$$
    where, in view of \cite[Definition 4]{MaMin2}, $D_{h1},D_{h2}$ are given, for every $z\in\D$, by
    $$D_{h1}f(z)=\frac{(1-|z|^2)f'(z)}{1-|f(z)|^2},$$
    $$D_{h2}f(z)=\frac{(1-|z|^2)^2f''(z)}{1-|f(z)|^2}+\frac{2(1-|z|^2)^2\ov{f(z)}f'(z)^2}{(1-|f(z)|^2)^2}-\frac{2\ov{z}(1-|z|^2)f'(z)}{1-|f(z)|^2},$$
    or by $D_{h1}f(z)=\tilde{f}_z'(0),D_{h1}f(z)=\tilde{f}_z''(0)$, where $\tilde{f}_z=-\varphi_{f(z)}\circ f \circ(-\varphi_{-z})$. $D_{h1}$ is also known as the {\it hyperbolic derivative}. 
\end{remark}

\section{A rigidity theorem involving the hyperbolic curvature}
\label{sec3}

\begin{remark}
\label{r1}
    If $\gamma$ is an arc of circle in $\D$ with endpoints on $\p\D$, oriented clockwise, then $\gamma$ has constant hyperbolic curvature equal to $2\cos\theta$, where $\theta$ is the angle between the $\gamma$ and $\p\D$ (anticlockwise oriented); see \cite[Example 1]{MejMin}; cf. \cite[Section 2.4]{FerGra}. This can be easily seen by applying $\varphi_\zeta$ to $\gamma$, and then using \eqref{khke} and the fact that the Euclidean curvature of  a circle oriented anticlockwise is the inverse of the radius, for every $\zeta\in\{\gamma\}$. In particular, $\gamma$ is a hyperbolic geodesic if and only if $k_h(\cdot,\gamma)\equiv 0$.
\end{remark}

\begin{example}
    \label{ex1}
    Let $f=e^{i\alpha}\varphi_a\in{\rm Aut}(\D)$, where $\alpha\in\mathbb{R}$, $a\in\D\setminus\{0\}$, and let $\la>0$. Then (see \cite[Example 1]{AraMejPom} and \cite[Example 3.2]{EfrGum}) 
    $$\pD\Oml(f)=\left\{\zeta\in\D:\left|\zeta-b\right|=\sqrt{\frac{|b|^2-1}{\la}}\right\},$$
    where $b=1/\overline{a}$, and $\pD\Oml(f)\neq\emptyset\Leftrightarrow\frac{1-|a|}{1+|a|}<\la<\frac{1+|a|}{1-|a|}$. By Remark \ref{r1},
    $$k_h(\zeta,\pD\Oml(f))=\sqrt{\frac{|b|^2-1}{\la}}\cdot(\la-1),$$
    for every $\zeta\in\pD\Oml(f)$. In particular, if $\la=1$, then $\pD\Om(f)$ is a hyperbolic geodesic.
\end{example}

A domain $\Omega\subset\D$ is said to be \textit{hyperbolically convex}, if, for every $z_1,z_2\in\Omega$, the hyperbolic geodesic segment connecting $z_1$ and $z_2$ is in $\Omega$.  $f:\D\to\D$ is said to be hyperbolically convex (see \cite{MaMin}), if $f$ is a conformal map onto a hyperbolically convex domain.

By \cite{Sol} (see also \cite{EfrGum}), if $f\notin{\rm Aut}(\D)$ and $f(0)\neq0$,  then $\Omega(f)$ is \textit{strictly} hyperbolically convex, i.e., for every $\zeta\in\pD\Omega(f)$ there exists a hyperbolic geodesic $\gamma_{\zeta}$, called {\it supporting} hyperbolic geodesic, passing through $\zeta$ such that $\{\gamma_{\zeta}\}\cap\pD\Omega(f)=\{\zeta\}$ (by the proof of \cite[Lemma 0]{MejMin}, it is sufficient to verify this property locally). In particular, $k_h(\zeta,\pD\Omega(f))\ge0$, for all $\zeta\in\pD\Omega(f)$ (see \cite[Theorem 1]{MaMin}). Note that, while the strict positiveness of the hyperbolic curvature of the boundary implies the strict hyperbolic convexity of the interior (see the proof of  \cite[Proposition 1]{MejMin}), the converse is not necessarily true, as the following example shows.

\begin{example}
    For $\varepsilon>0$, let $\gamma_\varepsilon(t)= t+it^4,t\in[-\varepsilon,\varepsilon]$. Let $\gamma_1,\gamma_2$ be arcs of circles such that each arc has an endpoint on $\partial\D$ and the other endpoint satisfies $\gamma_1(-\varepsilon)=\gamma_\varepsilon(-\varepsilon)$, $\gamma_1'(-\varepsilon)=\gamma_\varepsilon'(-\varepsilon)$, $\gamma_1''(-\varepsilon)=\gamma_\varepsilon''(-\varepsilon)$, respectively $\gamma_2(\varepsilon)=\gamma_\varepsilon(\varepsilon)$, $\gamma_2'(\varepsilon)=\gamma_\varepsilon'(\varepsilon)$, $\gamma_2''(\varepsilon)=\gamma_\varepsilon''(\varepsilon)$. By \eqref{kht}, $k_h(\gamma_\varepsilon(t),\gamma)=12t^2+O(t^4)$, as $t\to0$. So, we can choose a sufficiently small $\varepsilon>0$ such that the domain $\Omega\subset\D$ whose boundary in $\D$ is given by the $C^2$ regular arc $\Gamma=\gamma_1\cup\gamma_\varepsilon\cup\gamma_2$ is strictly hyperbolically convex. Indeed, $k_h(\zeta,\Gamma)>0$, for all $\zeta\in\pD\Omega\setminus\{0\}$, so we have a supporting hyperbolic geodesic at $\zeta$, and, even though $k_h(0,\Gamma)=0$, we have a supporting hyperbolic geodesic at $0$ given by the diameter $(-1,1)$.
\end{example}

The following rigidity result, which restates Theorem \ref{t01}, is a consequence of Theorem \ref{t0}.

\begin{corollary}
\label{t1}
    Let $f:\D\to\D$ be a holomorphic function such that $f(0)\neq0$.
    Then $k_h(\zeta,\pD\Omega(f))=0$ for some $\zeta\in\pD\Omega(f)$ if and only if $f\in{\rm Aut}(\D)$. Moreover, if one of the conditions holds, then
    $\pD\Omega(f)$ is a hyperbolic geodesic.
\end{corollary}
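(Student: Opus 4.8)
The plan is to derive Corollary~\ref{t1} from Theorem~\ref{t0} by specializing to $\la=1$ and extracting the rigidity from the equality discussion. Setting $\la=1$ in the lower bound of Theorem~\ref{t0}, the right-hand side becomes $\frac{1+|\zeta|^2-2|f(\zeta)|-|f'(\zeta)|^2(1-|f(\zeta)|)^2}{|\ov{f'(\zeta)}f(\zeta)-\zeta|}$. On $\pD\Om(f)$ we have $|f(\zeta)|=|\zeta|$, so the numerator simplifies to $(1-|\zeta|)^2-|f'(\zeta)|^2(1-|\zeta|)^2=(1-|\zeta|)^2(1-|f'(\zeta)|^2)$. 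Hence $k_h(\zeta,\pD\Om(f))\ge\frac{(1-|\zeta|)^2(1-|f'(\zeta)|^2)}{|\ov{f'(\zeta)}f(\zeta)-\zeta|}$. By the Schwarz--Pick Lemma (or the estimate $|c_0|\le1$ already invoked in the proof of Theorem~\ref{t0}, which in the $\la=1$ case on the boundary reads $|D_{h1}f(\zeta)|\le1$, i.e. $(1-|\zeta|^2)|f'(\zeta)|\le 1-|f(\zeta)|^2=1-|\zeta|^2$, giving $|f'(\zeta)|\le1$), the factor $1-|f'(\zeta)|^2\ge0$, so the lower bound is nonnegative, confirming $k_h\ge0$ in line with strict hyperbolic convexity.

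First I would establish the ``if'' direction: if $f\in{\rm Aut}(\D)$ with $f(0)\neq0$, then by Example~\ref{ex1} (taking $\la=1$) the curve $\pD\Om(f)$ is a hyperbolic geodesic, so $k_h(\zeta,\pD\Om(f))=0$ for every $\zeta$, and in particular for some $\zeta$. This direction is immediate and also yields the ``moreover'' clause in this case. Then I would turn to the harder ``only if'' direction. Suppose $k_h(\zeta,\pD\Om(f))=0$ for some $\zeta\in\pD\Om(f)$. Since the lower bound in Theorem~\ref{t0} is nonnegative and $k_h(\zeta,\pD\Om(f))=0$, equality must hold in Theorem~\ref{t0} at this $\zeta$, and moreover the vanishing forces the lower bound itself to be $0$, i.e. $(1-|\zeta|)^2(1-|f'(\zeta)|^2)=0$, so $|f'(\zeta)|=1$. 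In the notation of the proof of Theorem~\ref{t0}, $|c_0|=|D_{h1}f(\zeta)|=(1-|\zeta|^2)|f'(\zeta)|/(1-|f(\zeta)|^2)=1$, so $h=\varphi_b\circ f_{\alpha,\beta}\circ(-\varphi_{-\zeta_\alpha})$ is a disc automorphism fixing $0$ with $|h'(0)|=1$; by the Schwarz Lemma $h$ is a rotation, whence $f_{\alpha,\beta}$, and therefore $f$, is an automorphism of $\D$.

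The main obstacle is making the equality/rigidity bookkeeping airtight: I must argue that $k_h(\zeta,\pD\Om(f))=0$ genuinely forces the Schwarz-Lemma equality case $|c_0|=1$ rather than merely the weaker conclusion ``$f$ is a Blaschke product of degree at most $2$'' stated in Theorem~\ref{t0}. The clean route is to bypass the degree-$2$ equality case entirely and instead use the sharpened numerator identity above: on $\pD\Om(f)$ the lower bound collapses to $\frac{(1-|\zeta|)^2(1-|f'(\zeta)|^2)}{|\ov{f'(\zeta)}f(\zeta)-\zeta|}$, and since $k_h\ge0$ with $k_h(\zeta,\pD\Om(f))=0$ squeezing it, I conclude $|f'(\zeta)|=1$ directly, which is exactly the $|c_0|=1$ equality situation; this is cleaner than tracking the second-order coefficient $c_1$. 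I should also note that the degenerate factor $(1-|\zeta|)^2$ never vanishes since $|\zeta|<1$, and that $\ov{f'(\zeta)}f(\zeta)-\zeta\neq0$ by \eqref{grad} in Remark~\ref{r_grad}, so no division issue arises.

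Finally, for the ``moreover'' clause in the ``only if'' direction, once $f\in{\rm Aut}(\D)$ is established, $f(0)\neq0$ guarantees $f=e^{i\theta}\varphi_a$ with $a\neq0$, so Example~\ref{ex1} with $\la=1$ again applies and $\pD\Om(f)$ is a hyperbolic geodesic. This closes the equivalence and the additional assertion simultaneously.
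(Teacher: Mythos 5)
Your proposal is correct and follows essentially the same route as the paper: specialize Theorem~\ref{t0} to $\la=1$, use $|f(\zeta)|=|\zeta|$ to reduce the lower bound to $\frac{(1-|\zeta|)^2(1-|f'(\zeta)|^2)}{|\ov{f'(\zeta)}f(\zeta)-\zeta|}$, squeeze against $k_h\ge 0$ to force $|f'(\zeta)|=1$, and invoke the Schwarz--Pick equality case together with Example~\ref{ex1}. Your extra detour through $c_0$ and the map $h$ is harmless but unnecessary, since the Schwarz--Pick equality case already gives $f\in{\rm Aut}(\D)$ directly, exactly as in the paper.
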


\begin{proof}
Taking $\lambda=1$ in Theorem \ref{t0} and using the fact that $\zeta\in\pD\Omega(f) \Leftrightarrow |\zeta|=|f(\zeta)|$, we have
for every $\zeta\in\pD\Omega(f)$,
\begin{equation}
    \label{khlb}
  k_h(\zeta,\pD\Omega(f))\ge \frac{(1-|\zeta|)^2(1-|f'(\zeta)|^2)}{|\overline{f'(\zeta)}f(\zeta)-\zeta|}.  
\end{equation}

Now, the necessary condition follows from \eqref{khlb} and the Schwarz-Pick Lemma (using also $|\zeta|=|f(\zeta)|$): $|f'(\zeta)|\le1$; $|f'(\zeta)|=1\Leftrightarrow f\in{\rm Aut}(\D)$. The sufficient condition holds, because, if $f\in{\rm Aut}(\D)$, then $\pD\Om(f)$ is a hyperbolic geodesic in $\D$, by Example \ref{ex1}.
\end{proof}

\section{Lower bounds for curvatures of level curves}
\label{sec4}

In this section, we give more consequences of Theorem \ref{t0}. For each lower bound for hyperbolic, respectively Euclidean, curvature, we consider the equality case.

In view of \cite[Theorem 3.1]{EfrGum}, if $\la>1$, then $\Om_\la(f)$ is strictly hyperbolically convex, for every $f:\D\to\D$ holomorphic. We now restate and prove Theorem \ref{t02}. 

\begin{corollary}
\label{ct1}
     Let $f:\D\to\D$ be holomorphic and $\lambda>1$. Then, for every $\zeta\in\pD\Omega_\lambda(f)$,
     \begin{equation}
     \label{khlb3}
         k_h(\zeta,\pD\Omega_\lambda(f))\ge \frac{\la(\la-1)(1-|\zeta|^2)}{|\overline{f'(\zeta)}f(\zeta)-\lambda\zeta|}.
     \end{equation}
    Moreover, equality holds for some $\zeta\in\pD\Oml(f)$ if and only if $f\in{\rm Aut}(\D)$ with $\la<\frac{1+|f(0)|}{1-|f(0)|}$. Furthermore, if equality holds for some $\zeta\in\pD\Oml(f)$, then it holds for every $\zeta\in\pD\Oml(f)$.
\end{corollary}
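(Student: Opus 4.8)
The plan is to derive \eqref{khlb3} directly from Theorem \ref{t0} by feeding the defining relation of the level curve into its numerator and then invoking the Schwarz-Pick Lemma. On $\pD\Omega_\lambda(f)$ we have $u(\zeta)=0$, i.e.
$$1-|f(\zeta)|^2=\lambda(1-|\zeta|^2).$$
Writing $a=|f(\zeta)|$ and $r=|\zeta|$, I would first record the elementary consequence $(\lambda-1)(1-r^2)=\lambda(1-r^2)-(1-r^2)=(1-a^2)-(1-r^2)=r^2-a^2$, whence
$$(1+r^2-2a)-(\lambda-1)(1-r^2)=1+r^2-2a-(r^2-a^2)=(1-a)^2.$$
Therefore the numerator appearing in Theorem \ref{t0} satisfies
$$\lambda^2(1+r^2-2a)-|f'(\zeta)|^2(1-a)^2-\lambda^2(\lambda-1)(1-r^2)=\bigl(\lambda^2-|f'(\zeta)|^2\bigr)(1-a)^2.$$
Since $a=|f(\zeta)|<1$, this difference is nonnegative exactly when $|f'(\zeta)|\le\lambda$, and by the Schwarz-Pick Lemma together with the boundary relation, $|f'(\zeta)|\le\frac{1-|f(\zeta)|^2}{1-|\zeta|^2}=\lambda$. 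Multiplying through by the positive quantity $\lambda|\overline{f'(\zeta)}f(\zeta)-\lambda\zeta|$ (nonzero by Remark \ref{r_grad}) then upgrades the estimate of Theorem \ref{t0} to \eqref{khlb3}.

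For the equality analysis, equality in \eqref{khlb3} forces equality in both steps of this chain. The reduction is an equality only if $\bigl(\lambda^2-|f'(\zeta)|^2\bigr)(1-a)^2=0$, i.e. $|f'(\zeta)|=\lambda$; since $\lambda=\frac{1-|f(\zeta)|^2}{1-|\zeta|^2}$ on the curve, this is precisely the equality case of the Schwarz-Pick Lemma, so $f\in{\rm Aut}(\D)$. Conversely, assume $f\in{\rm Aut}(\D)$, say $f=e^{i\alpha}\varphi_a$; then $|f(0)|=|a|$, and by Example \ref{ex1} the boundary $\pD\Omega_\lambda(f)$ is nonempty iff $\frac{1-|a|}{1+|a|}<\lambda<\frac{1+|a|}{1-|a|}$. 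Because $\lambda>1$ makes the left inequality automatic, the surviving requirement is exactly $\lambda<\frac{1+|f(0)|}{1-|f(0)|}$. For such $f$ the Schwarz-Pick equality gives $|f'(\zeta)|=\lambda$ at every $\zeta\in\pD\Omega_\lambda(f)$, so the reduction is an equality there; moreover equality holds in Theorem \ref{t0} itself, since in its proof the auxiliary map $h=\varphi_b\circ f_{\alpha,\beta}\circ(-\varphi_{-\zeta_\alpha})$ is then a composition of automorphisms fixing the origin, hence a rotation, giving $c_1=h''(0)/2=0=1-|c_0|^2$ and saturating the sole inequality $|c_1|\le1-|c_0|^2$ used there. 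Combining the two saturations yields equality in \eqref{khlb3}.

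The final assertion is then immediate: the converse argument produces equality simultaneously at every $\zeta\in\pD\Omega_\lambda(f)$ whenever $f\in{\rm Aut}(\D)$, so once equality holds at one point (which forces $f\in{\rm Aut}(\D)$) it holds at all points; this is also visible from Example \ref{ex1}, where $\pD\Omega_\lambda(f)$ is a Euclidean circle of constant hyperbolic curvature. I expect the only genuinely delicate point to be this converse direction of the equality statement: Theorem \ref{t0} records only ``equality $\Rightarrow$ Blaschke product of degree at most $2$'', so to run the argument I must reinspect its proof and confirm that an automorphism does saturate $|c_1|\le1-|c_0|^2$, whereas the algebraic reduction producing the bound is entirely routine.
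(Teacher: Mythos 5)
Your derivation of the inequality \eqref{khlb3} is correct and is essentially the paper's argument in a slightly different algebraic packaging: the paper subtracts $\la^2(1-|f(\zeta)|)^2$ from the numerator of Theorem \ref{t0} using Schwarz--Pick and then simplifies $1+|\zeta|^2-2|f(\zeta)|-(1-|f(\zeta)|)^2=|\zeta|^2-|f(\zeta)|^2=(\la-1)(1-|\zeta|^2)$, whereas you isolate the remainder term $(\la^2-|f'(\zeta)|^2)(1-|f(\zeta)|)^2$ first; these are the same estimate. The forward direction of the equality analysis (equality forces $|f'(\zeta)|=\la=\frac{1-|f(\zeta)|^2}{1-|\zeta|^2}$, hence $f\in{\rm Aut}(\D)$ by Schwarz--Pick, and then $\la<\frac{1+|f(0)|}{1-|f(0)|}$ from the nonemptiness criterion of Example \ref{ex1}) also matches the paper.

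The gap is in your converse. You argue that for $f\in{\rm Aut}(\D)$ equality holds in Theorem \ref{t0} because the auxiliary map $h$ is a rotation, ``saturating the sole inequality $|c_1|\le 1-|c_0|^2$ used there.'' That is not the sole inequality: the proof of Theorem \ref{t0} also imports, from Efraimidis--Gumenyuk, the estimate
\begin{equation*}
\frac{v_{\alpha,\beta}''(0)}{2}\le \la(1-\zeta_\alpha^2)\left[(1-\zeta_\alpha^2)(\la|c_0|^2-1)+2b^2|c_0|^2-2\zeta_\alpha^2+2b|c_1|\right],
\end{equation*}
whose equality case is neither stated nor analysed in this paper. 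Without knowing that this inequality is saturated for automorphisms, you cannot conclude that equality holds in Theorem \ref{t0}, and hence not in \eqref{khlb3} either; saturating only the Schwarz--Pick step $|c_1|\le 1-|c_0|^2$ and the reduction step $|f'(\zeta)|\le\la$ leaves the chain with one unexamined link. The paper avoids this entirely: for $f=e^{i\alpha}\varphi_a$ it computes $k_h(\zeta,\pD\Oml(f))$ \emph{directly} from Example \ref{ex1}, where $\pD\Oml(f)$ is an explicit circular arc of constant hyperbolic curvature $\sqrt{\tfrac{1-|a|^2}{\la}}\,\tfrac{\la-1}{|a|}$, and then checks by hand (using $|1-\ov{a}\zeta|^2=\tfrac{1-|a|^2}{\la}$ on the level curve) that this value coincides with the right-hand side of \eqref{khlb3} at every boundary point. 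This direct verification is what you need to substitute for your saturation argument; it also immediately gives the ``furthermore'' clause, which you otherwise state correctly.
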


\begin{proof}
   In Theorem \ref{t0}, we apply the Schwarz-Pick inequality and we use the fact that $\zeta\in\pD\Omega_\lambda(f)\Leftrightarrow \frac{1-|f(\zeta)|^2}{1-|\zeta|^2}=\la$, to get
      \begin{align*}
       k_h(\zeta,\pD\Omega_\lambda(f))\ge & \frac{\lambda^2\left(1+|\zeta|^2-2|f(\zeta)|\right)-|f'(\zeta)|^2(1-|f(\zeta)|)^2}{\la|\overline{f'(\zeta)}f(\zeta)-\lambda\zeta|} \\
       \ge & \frac{\lambda\left[1+|\zeta|^2-2|f(\zeta)|-(1-|f(\zeta)|)^2\right]}{|\overline{f'(\zeta)}f(\zeta)-\lambda\zeta|} \\
    =& \frac{\la(|\zeta|^2-|f(\zeta)|^2)}{|\overline{f'(\zeta)}f(\zeta)-\lambda\zeta|}\\
    =& \frac{\la(1-|f(\zeta)|^2-(1-|\zeta|^2))}{|\overline{f'(\zeta)}f(\zeta)-\lambda\zeta|}\\
    =& \frac{\la(\la-1)(1-|\zeta|^2)}{|\overline{f'(\zeta)}f(\zeta)-\lambda\zeta|}.
   \end{align*}
   
Next, assume that there exists $\zeta\in\pD\Oml(f)$ such that equality in \eqref{khlb3} holds. Then, by the Schwarz-Pick Lemma, $f\in{\rm Aut}(\D)$. In view of Example \ref{ex1}, $\la<\frac{1+|f(0)|}{1-|f(0)|}$. Conversely, if $f=e^{i\alpha}\varphi_a\in{\rm Aut}(\D)$, with $\alpha\in\mathbb{R}$, $a\in\D$ and $\la<\frac{1+|a|}{1-|a|}$, then, by Example \ref{ex1} again, $\pD\Oml(f)\neq\emptyset$ and one can easily verify that, for every $\zeta\in\pD\Oml(f)$ $\left(\Leftrightarrow |1-\ov{a}\zeta|^2=\frac{1-|a|^2}{\la}\right)$, we have
    $k_h(\zeta,\pD\Oml(f))=\sqrt{\frac{1-|a|^2}{\la}}\frac{\la-1}{|a|}=\frac{\la(\la-1)(1-|\zeta|^2)}{|\overline{f'(\zeta)}f(\zeta)-\lambda\zeta|}$.
\end{proof}

\begin{corollary}
\label{cr3}
    Let  $f:\D\to\D$ be a holomorphic function such that $f(0)\neq0$. Then, for every $\zeta\in\pD\Om(f)$, 
    \begin{equation}
    \label{khlb2}
        k_h(\zeta,\pD\Omega(f))\ge \frac{(1-|\zeta|)^2}{2|\zeta|}(1-|f'(\zeta)|^2).
    \end{equation}
    Moreover, equality holds for some $\zeta\in\pD\Om(f)$ if and only if $f\in{\rm Aut}(\D)$. Furthermore, if equality holds for some $\zeta\in\pD\Om(f)$, then it holds for every $\zeta\in\pD\Om(f)$, with the lower bound being identically 0.
\end{corollary}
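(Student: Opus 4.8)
The plan is to read off \eqref{khlb2} from Theorem \ref{t0} by a single estimate of the denominator; the intermediate inequality \eqref{khlb}, already extracted in the proof of Corollary \ref{t1}, is the convenient launching point. First I would specialize Theorem \ref{t0} to $\la=1$. Since $\zeta\in\pD\Omega(f)$ is equivalent to $|\zeta|=|f(\zeta)|$, substituting $|f(\zeta)|=|\zeta|$ collapses the numerator,
\[
\la^2\!\left(1+|\zeta|^2-2|f(\zeta)|\right)-|f'(\zeta)|^2(1-|f(\zeta)|)^2=(1-|\zeta|)^2\!\left(1-|f'(\zeta)|^2\right),
\]
which reproduces \eqref{khlb}. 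By the Schwarz--Pick Lemma together with $|f(\zeta)|=|\zeta|$ one gets $|f'(\zeta)|\le1$, so this numerator is nonnegative.

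Next I would bound the denominator from above by the triangle inequality,
\[
|\overline{f'(\zeta)}f(\zeta)-\zeta|\le|f'(\zeta)||f(\zeta)|+|\zeta|=|\zeta|\big(|f'(\zeta)|+1\big)\le2|\zeta|,
\]
again using $|f(\zeta)|=|\zeta|$ and $|f'(\zeta)|\le1$. Because the numerator in \eqref{khlb} is nonnegative, enlarging the denominator to $2|\zeta|$ can only decrease the right-hand side, so \eqref{khlb2} follows immediately.

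For the equality statement I would trace back where strictness can be lost. The factor $(1-|\zeta|)^2$ never vanishes for $\zeta\in\D$, so attaining the bound \eqref{khlb2} with a \emph{nonzero} value would require simultaneously $1-|f'(\zeta)|^2>0$ and equality in the triangle-inequality step; but $|\zeta|\big(|f'(\zeta)|+1\big)=2|\zeta|$ forces $|f'(\zeta)|=1$, a contradiction. Hence equality in \eqref{khlb2} can occur only through $1-|f'(\zeta)|^2=0$, i.e. $|f'(\zeta)|=1$, which by the Schwarz--Pick Lemma (with $|f(\zeta)|=|\zeta|$) is equivalent to $f\in{\rm Aut}(\D)$. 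Conversely, if $f\in{\rm Aut}(\D)$ then Schwarz--Pick is an equality, so $|f'(\zeta)|=1$ at every $\zeta\in\pD\Omega(f)$; thus $1-|f'(\zeta)|^2\equiv0$, both sides of \eqref{khlb2} vanish identically, and by Example \ref{ex1} the curve $\pD\Omega(f)$ is a hyperbolic geodesic with $k_h\equiv0$. This simultaneously yields the ``if and only if'', the propagation of equality to every boundary point, and the fact that the bound is then identically $0$.

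The computations are routine, so there is no genuine obstacle; the only point requiring care is the equality analysis, namely recognizing that the right-hand side of \eqref{khlb2} can vanish for two formally distinct reasons (via $1-|f'|^2=0$ versus via the denominator estimate) and verifying that an \emph{attained} equality is possible precisely in the automorphism case, where the bound degenerates to $0$.
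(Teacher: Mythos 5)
Your proposal is correct and follows essentially the same route as the paper: specialize Theorem \ref{t0} to $\la=1$ to get \eqref{khlb}, then enlarge the denominator to $2|\zeta|$ via the Schwarz--Pick inequality, the triangle inequality and $|f(\zeta)|=|\zeta|$, with the equality case reduced to $|f'(\zeta)|=1$ and hence to $f\in{\rm Aut}(\D)$, where Example \ref{ex1} shows both sides vanish identically. Your equality analysis is in fact slightly more explicit than the paper's (it cleanly rules out attainment with a nonzero right-hand side), but the substance is identical.
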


\begin{proof}
The inequality follows from \eqref{khlb}, using, in the denominator, the Schwarz-Pick inequality, the triangle inequality and the fact that $\zeta\in\pD\Om(f)$ $\Leftrightarrow|f(\zeta)|=|\zeta|$. By the Schwarz-Pick Lemma, the equality holds for some $\zeta\in\pD\Om(f)$ if and only if  $f\in{\rm Aut}(\D)$. As in the proof of Corollary \ref{t1}, if $f\in{\rm Aut}(\D)$, then $\pD\Om(f)$ is a hyperbolic geodesic.
\end{proof}

\begin{corollary}
\label{cr2}
 Let $f:\D\to\D$ be holomorphic and $\lambda>1$. Then, for every $\zeta\in\pD\Omega_\lambda(f)$,
    \begin{equation}
    \label{kh3}
         k_h(\zeta,\pD\Omega_\lambda(f))\ge |\zeta|-|f(\zeta)|.
     \end{equation}
Moreover, equality holds for some $\zeta\in\pD\Oml(f)$ if and only if $f\in{\rm Aut}(\D)$ with $\la\le\frac{1}{1-|f(0)|^2}$. Furthermore, equality holds for at most one point.
\end{corollary}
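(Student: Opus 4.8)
The plan is to read off the inequality directly from Corollary \ref{ct1} by bounding the denominator, and then to obtain the equality characterization by tracking when each estimate used is sharp.

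For the inequality, note that on $\pD\Oml(f)$ we have $\frac{1-|f(\zeta)|^2}{1-|\zeta|^2}=\la$, so the Schwarz--Pick Lemma gives $|f'(\zeta)|\le\la$, whence by the triangle inequality
$$|\ov{f'(\zeta)}f(\zeta)-\la\zeta|\le|f'(\zeta)||f(\zeta)|+\la|\zeta|\le\la(|f(\zeta)|+|\zeta|).$$
Substituting this into \eqref{khlb3} and using the boundary identity $(\la-1)(1-|\zeta|^2)=|\zeta|^2-|f(\zeta)|^2=(|\zeta|-|f(\zeta)|)(|\zeta|+|f(\zeta)|)$, the factor $|f(\zeta)|+|\zeta|$ cancels and \eqref{kh3} falls out.

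For the equality case, equality in \eqref{kh3} forces simultaneous equality in \eqref{khlb3} and in both steps of the denominator estimate. By Corollary \ref{ct1}, equality in \eqref{khlb3} already forces $f=e^{i\alpha}\varphi_a\in{\rm Aut}(\D)$ with $\la<\frac{1+|a|}{1-|a|}$ (so $\pD\Oml(f)\neq\emptyset$), and for such $f$ one checks from Example \ref{ex1} that $|f'(\zeta)|=\la$ holds identically on $\pD\Oml(f)$, so the Schwarz--Pick step is automatically sharp. The remaining constraint is equality in the triangle inequality, i.e.\ that $\ov{f'(\zeta)}f(\zeta)$ be a nonpositive real multiple of $\zeta$. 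Here I would compute, for $\zeta\in\pD\Oml(f)$ (where $|1-\ov a\zeta|^2=\frac{1-|a|^2}{\la}$), the clean expression $\ov{f'(\zeta)}f(\zeta)=-\la\frac{a-\zeta}{1-a\ov\zeta}$, so that $\ov{f'(\zeta)}f(\zeta)-\la\zeta=-\la\frac{a(1-|\zeta|^2)}{1-a\ov\zeta}$; the triangle equality then becomes the requirement that $W(\zeta):=\frac{a-\zeta}{\zeta(1-a\ov\zeta)}$ be real and nonnegative.

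The crux is analyzing this last condition. First I would show that $W(\zeta)\in\mathbb{R}$ forces $\zeta$ onto the line $\mathbb{R}a=\{ta:t\in\mathbb{R}\}$: clearing denominators in $W=\ov W$ and using $1-|\zeta|^2\neq0$ reduces it to $(1-|\zeta|^2)(a\ov\zeta-\ov a\zeta)=0$, i.e.\ $a\ov\zeta=\ov a\zeta\in\mathbb{R}$. Since the center $b=1/\ov a$ of the circle carrying $\pD\Oml(f)$ also lies on $\mathbb{R}a$, this line meets the circle in two diametrically opposite points, exactly one of which, say $\zeta_0$, lies in $\D$ (the other has modulus $\frac{1}{|a|}+R>1$); this already yields the ``at most one point'' assertion. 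After a rotation normalizing $a=|a|>0$ (legitimate by the rotational invariance of $k_h$ and of $|\zeta|-|f(\zeta)|$ exploited in the proof of Theorem \ref{t0}), $\zeta_0=\frac{1}{|a|}-R$ with $R=\frac{1}{|a|}\sqrt{(1-|a|^2)/\la}$ is real, the denominator $\zeta_0(1-|a|\zeta_0)$ is positive, and the sign condition $W(\zeta_0)\ge0$ reduces to $\zeta_0\le|a|$, which a short computation identifies with $\la\le\frac{1}{1-|a|^2}$. Since $\frac{1}{1-|a|^2}<\frac{1+|a|}{1-|a|}$, this single inequality subsumes the range condition coming from \eqref{khlb3}. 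Conversely, when $f\in{\rm Aut}(\D)$ and $\la\le\frac{1}{1-|f(0)|^2}$ (recall $|f(0)|=|a|$), all estimates are sharp at $\zeta_0$, so equality holds there; together with the uniqueness of $\zeta_0$ this gives the stated characterization.
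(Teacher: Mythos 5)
Your proposal is correct and follows essentially the same route as the paper: the inequality comes from Corollary \ref{ct1} via the Schwarz--Pick and triangle inequalities together with the boundary identity $(\la-1)(1-|\zeta|^2)=|\zeta|^2-|f(\zeta)|^2$, and the equality analysis reduces, exactly as in the paper, to the condition $\ov{\zeta}(a-\zeta)(1-\ov{a}\zeta)\ge0$ (your $W(\zeta)\ge0$), whose reality part singles out the unique point of $\pD\Oml(f)$ on the line $\mathbb{R}a$ and whose sign part gives $\la\le\frac{1}{1-|a|^2}$. The only cosmetic difference is that you split the condition into reality and sign and describe the unique point geometrically, whereas the paper runs a single chain of equivalences ending in ``$a\ov{\zeta}\in(0,1)$ and $|a|\ge|\zeta|$''.
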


\begin{proof}
Applying the Schwarz-Pick inequality and the triangle inequality,
we have $|\overline{f'(\zeta)}f(\zeta)-\lambda\zeta|\le \la(|f(\zeta)|+|\zeta|)$. So, by Corollary \ref{ct1}, 
\begin{equation*}
         k_h(\zeta,\pD\Omega_\lambda(f))\ge \frac{(\la-1)(1-|\zeta|^2)}{|\zeta|+|f(\zeta)|}.
     \end{equation*}
Since $1-|f(\zeta)|^2=\la(1-|\zeta|^2)$, $\frac{(\la-1)(1-|\zeta|^2)}{|\zeta|+|f(\zeta)|}=|\zeta|-|f(\zeta)|$ and thus \eqref{kh3} holds.
   Note that $|\zeta|-|f(\zeta)|>0$.

The inequality in \eqref{kh3} becomes an equality for some $\zeta\in\pD\Oml(f)$ only if $f=e^{i\alpha}\varphi_a\in{\rm Aut}(\D)$, where $\alpha\in\mathbb{R}$, $a\in\D$, $\la<\frac{1+|a|}{1-|a|}$ (see Corollary \ref{ct1}). In particular, $a,\zeta\in\D\setminus\{0\}$. In this case, $|\overline{f'(\zeta)}f(\zeta)-\lambda\zeta| = \la(|f(\zeta)|+|\zeta|)\Leftrightarrow \left|\frac{a-\zeta}{1-a\ov{\zeta}}+\zeta\right|=\left|\frac{a-\zeta}{1-a\ov{\zeta}}\right|+|\zeta|\Leftrightarrow \ov{\zeta}(a-\zeta)(1-\ov{a}\zeta)\ge0\Leftrightarrow \ov{a}\zeta\in\mathbb{R}\mbox{ and }a\ov{\zeta}(1+|\zeta|^2)\ge|\zeta|^2+|a|^2|\zeta|^2\Leftrightarrow a\ov{\zeta}\in(0,1)\mbox{ and }(1-|a||\zeta|)(|a|-|\zeta|)\ge0\Leftrightarrow a\ov{\zeta}\in(0,1)\mbox{ and }|a|\ge|\zeta|$. Note that $\ov{a}\zeta\in(0,1)\Leftrightarrow \zeta$ is the midpoint of the circular arc $\pD\Oml(f)$. So, by Example \ref{ex1}, $|\zeta|=\frac{1}{|a|}-\sqrt{\frac{\frac{1}{|a|^2}-1}{\la}}\le|a|\Leftrightarrow \la\le\frac{1}{1-|a|^2}$. Hence, the equality in \eqref{kh3} holds for at most one point.
\end{proof}

\begin{corollary}
    \label{c4}
     Let $\la\ge1$ and $f:\D\to\D$ be holomorphic with $f(0)\neq0$, if $\la=1$. Then, for every $\zeta\in\pD\Omega_\lambda(f)$,
     \begin{equation}
     \label{kelb}
k_e(\zeta,\pD\Omega_\lambda(f))\ge -\frac{|\zeta|+|f(\zeta)|}{1-|\zeta|^2}.
     \end{equation}
Moreover, equality holds for some $\zeta\in\pD\Oml(f)$ if and only if $f\in{\rm Aut}(\D)$ with $\la\le\frac{1}{1-|f(0)|^2}$. Furthermore, equality holds for at most one point.
\end{corollary}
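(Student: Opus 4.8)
The plan is to pass from Euclidean to hyperbolic curvature through the identity \eqref{kh}. Since by Remark \ref{r_grad} the inward unit normal $n=\frac{\ov{f'(\zeta)}f(\zeta)-\la\zeta}{|\ov{f'(\zeta)}f(\zeta)-\la\zeta|}$ is well defined at each $\zeta\in\pD\Oml(f)$, rearranging \eqref{kh} gives
\[
k_e(\zeta,\pD\Oml(f))=\frac{k_h(\zeta,\pD\Oml(f))+2{\rm Re}(n\ov{\zeta})}{1-|\zeta|^2},
\]
so, since $1-|\zeta|^2>0$, it is enough to bound the numerator below by $-(|\zeta|+|f(\zeta)|)$.

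First I would estimate the two terms of the numerator separately. Because $|n|=1$, the Cauchy--Schwarz inequality yields $2{\rm Re}(n\ov{\zeta})\ge-2|\zeta|$, with equality exactly when $n=-\zeta/|\zeta|$, that is, when the inward normal is radial and points toward the origin (note $\zeta\neq0$, as $0\in\Oml(f)$). For the curvature term, when $\la>1$ Corollary \ref{cr2} gives $k_h(\zeta,\pD\Oml(f))\ge|\zeta|-|f(\zeta)|$, while when $\la=1$ we have $|\zeta|=|f(\zeta)|$ on $\pD\Om(f)$ and $k_h\ge0$ by hyperbolic convexity, so the same inequality $k_h\ge|\zeta|-|f(\zeta)|$ holds. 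Summing the two bounds produces $k_h+2{\rm Re}(n\ov{\zeta})\ge(|\zeta|-|f(\zeta)|)-2|\zeta|=-(|\zeta|+|f(\zeta)|)$, which is precisely \eqref{kelb}.

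For the equality statement, since \eqref{kelb} is obtained by adding two independent lower bounds, equality forces equality in each. Equality in the curvature bound forces $f\in{\rm Aut}(\D)$: by Corollary \ref{cr2} when $\la>1$ (which moreover yields $\la\le\frac{1}{1-|f(0)|^2}$ and singles out a unique boundary point), and by Corollary \ref{t1} when $\la=1$ (in which case $\pD\Om(f)$ is a geodesic, so $k_h\equiv0$ and the curvature equality holds everywhere). In both situations I would then use Example \ref{ex1}: writing $f=e^{i\alpha}\varphi_a$, the set $\pD\Oml(f)$ is the circle $|\zeta-b|=\sqrt{(|b|^2-1)/\la}$ with $b=1/\ov{a}$, and $\Oml(f)$ is its exterior (since $0\in\Oml(f)$ lies outside this circle). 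The inward normal $\frac{\zeta-b}{|\zeta-b|}$ is radial, i.e.\ equals $-\zeta/|\zeta|$, exactly at the point $\zeta_{\min}$ nearest the origin, so the normal bound is tight only there; and $\zeta_{\min}$ is precisely the point with $\ov{a}\zeta\in(0,1)$ appearing in Corollary \ref{cr2}, where the curvature bound is tight provided $\la\le\frac{1}{1-|f(0)|^2}$. Hence equality in \eqref{kelb} holds if and only if $f\in{\rm Aut}(\D)$ with $\la\le\frac{1}{1-|f(0)|^2}$, in which case it occurs at exactly one point; in particular, it occurs for at most one point.

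The hard part will be confirming the compatibility of the two equality conditions: that the single point where the hyperbolic-curvature estimate of Corollary \ref{cr2} is sharp coincides with the point where the inward normal becomes radial. This reduces to the explicit geometry of the automorphism case in Example \ref{ex1} --- identifying $\zeta_{\min}$ with the midpoint $\ov{a}\zeta\in(0,1)$ of the boundary circle, recognizing $\Oml(f)$ as the exterior of that circle, and tracking the inequality $|\zeta_{\min}|\le|a|$, which is exactly what the threshold $\la\le\frac{1}{1-|f(0)|^2}$ encodes --- all of which is already contained in the proof of Corollary \ref{cr2}.
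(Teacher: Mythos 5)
Your proposal is correct and follows essentially the same route as the paper: both pass from $k_e$ to $k_h$ via \eqref{kh}, bound ${\rm Re}(n\ov{\zeta})\ge -|\zeta|$ and $k_h\ge|\zeta|-|f(\zeta)|$ using Corollaries \ref{cr3} and \ref{cr2}, and settle the equality case by reducing to the automorphism picture of Example \ref{ex1}, where both component bounds are tight exactly at the midpoint of the circular arc. The only cosmetic difference is that the paper verifies equality at the midpoint by directly computing $k_e=-\frac{1}{1/|a|-|\zeta|}$, whereas you reuse the equality analysis already carried out in Corollary \ref{cr2}; both are fine.
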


\begin{proof}
Since $|\zeta|-|f(\zeta)|=\frac{(\la-1)(1-|\zeta|^2)}{|\zeta|+|f(\zeta)|}$, for every $\zeta\in\pD\Omega_\lambda(f)$, we deduce from Corollaries \ref{cr3} and \ref{cr2} that
$$\frac{1}{1-|\zeta|^2}k_h(\zeta,\pD\Oml(f))\ge \frac{\la-1}{|\zeta|+|f(\zeta)|}.$$
Using \eqref{kh} and $\la(1-|\zeta|^2)=1-|f(\zeta)|^2$,
$$k_e(\zeta,\pD\Oml(f))\ge \frac{\la-1}{|\zeta|+|f(\zeta)|}-\frac{2|\zeta|}{1-|\zeta|^2}=-\frac{(|\zeta|+|f(\zeta)|)^2}{(|\zeta|+|f(\zeta)|)(1-|\zeta|^2)}.$$

  If equality holds for some $\zeta\in\pD\Oml(f)$ in \eqref{kelb}, then $f=e^{i\alpha}\varphi_a\in{\rm Aut}(\D)$, where $\alpha\in\mathbb{R}$, $a\in\D\setminus\{0\}$, $\la\le\frac{1}{1-|a|^2}$ (see Corollaries \ref{cr3} and \ref{cr2}). Moreover, by \eqref{kh}, in the case of equality, ${\rm Re}(\ov{\zeta}n)=-|\zeta|$, which is equivalent with $\zeta$ being the midpoint of the circular arc $\pD\Oml(f)$. So, 
    $k_e(\zeta,\pD\Oml(f))=-\frac{1}{\frac{1}{|a|}-|\zeta|}$ and this is equal to the lower bound if and only if $|f(\zeta)|=\frac{|a|-|\zeta|}{1-|\ov{a}\zeta|}$. This equality holds, since $\ov{a}\zeta\in(0,1)$. So, the equality in \eqref{kelb} holds for at most one point.
\end{proof}

\section{Bounds for hyperbolic curvature of Jordan level curves}
\label{sec5}

In this section, we consider a special case when $\p\Om(f)$ is a Jordan curve that lies in  $\D$, namely, we estimate the hyperbolic curvature of $\p\Om(rf)$, when $r\in(0,1)$. In the following, we denote by $\dt$ the unit circle $\p\D$.

\begin{remark}
    Let $f:\D\to\ov{\D}$ be holomorphic such that $f(0)\neq0$. Then, for every $r\in(0,1)$, $\p\Om(rf)$ is a starlike smooth Jordan curve in $\D$. Indeed, since $\Om(rf)=\{z\in\D:|z|<r|f(z)|\}\subset\{z\in\D:|z|<r\}$, the above holds in view of \cite[Theorem 2.2]{MejPom}.
\end{remark}

\begin{remark}
\label{krrr}
     If we consider an arc of the circle  $\gamma(t)=c+re^{it}$, where  $c\in \C$ and $r>0$ are such that $\{\gamma\}\subset \D$ (i.e., $r-1<|c|<r+1$), then $k_h(z,\gamma)=\cfrac{1-|c|^2+r^2}{r},z\in\{\gamma\}$. Indeed, using  \eqref{kht} and some simple computations, we get the formula. In particular, the hyperbolic curvature of $r\dt$ oriented anticlockwise is $r+\frac{1}{r}$. Hence, the hyperbolic curvature of any hyperbolic circle in $\D$, of hyperbolic radius $r_h>0$, oriented anticlockwise, is $2\coth(2r_h)$ (see \cite[Example 1]{MejMin}).  The circles are the only closed regular $C^2$ curves in $\D$ that have constant hyperbolic curvature (see \cite[Section 2.4]{FerGra}; cf. \cite[Appendix, p. 315]{doC}).
\end{remark}

In the next theorem, which restates Theorem \ref{t03}, we exploit the hyperbolic convexity of the sublevel sets, by using the sharp inequality of Ma and Minda \cite[Theorem 5]{MaMin} and the fixed point function of Mej\'ia and Pommerenke \cite{MejPom}, to get sharp bounds for the hyperbolic curvature of their boundaries. 

\begin{theorem}
\label{tkr}
    Let $f:\D\to\ov{\D}$ be holomorphic such that $f(0)\neq0$. Then the following sharp inequalities hold, for all $r\in(0,1)$ and $\zeta\in\p\Om(rf)$,
    \begin{equation}
    \label{khr}
    \frac{1+r}{1-r}C_{rf,\zeta}-\frac{2(1-r^2)}{r}\frac{1}{C_{rf,\zeta}} \ge k_h(\zeta,\p\Om(rf))\ge\frac{1-r}{1+r}C_{rf,\zeta}+\frac{2(1-r^2)}{r}\frac{1}{C_{rf,\zeta}},
    \end{equation}
    where $C_{rf,\zeta}=|\ov{(rf)'(\zeta)}(rf)(\zeta)-\zeta|\frac{1-|\zeta|^2}{|\zeta|^2}$.
\end{theorem}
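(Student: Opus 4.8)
The plan is to upgrade the inequality of Theorem \ref{t0} into an \emph{exact} curvature formula, isolate the single genuinely second-order term, and then estimate that term two-sidedly and sharply, using the hyperbolic convexity of $\Om(rf)$ through the Ma--Minda inequality \cite[Theorem 5]{MaMin}; sharpness will be read off from the Mej\'ia--Pommerenke fixed point function. Concretely, I would set $g=rf$, fix $r\in(0,1)$ and $\zeta\in\p\Om(rf)$ (so $|g(\zeta)|=|\zeta|=:\rho$), and, after a rotation exactly as in the proof of Theorem \ref{t0}, assume $\zeta=\rho>0$ and $g(\zeta)=\rho$. With $u=|g|^2-|z|^2$ we have $\frac{\p u}{\pzb}(\zeta)=\ov{g'(\zeta)}g(\zeta)-\zeta$, whence $C_{rf,\zeta}=\big|\frac{\p u}{\pzb}(\zeta)\big|\frac{1-\rho^2}{\rho^2}$; specializing the intermediate identity in the proof of Theorem \ref{t0} to $\la=1$ then gives
$$k_h(\zeta,\p\Om(rf))=-\frac{1}{(1-\rho^2)\,\big|\frac{\p u}{\pzb}(\zeta)\big|}\,\frac{v''(0)}{2}=-\frac{1}{C_{rf,\zeta}\,\rho^2}\,\frac{v''(0)}{2}.$$

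Next, instead of bounding $v''(0)/2$ by Schwarz--Pick as in Theorem \ref{t0}, I would retain it exactly. Since $1-|g(\zeta)|^2=1-\rho^2$ on the boundary, one has $c_0=D_{h1}g(\zeta)=g'(\zeta)$ and $2c_1=D_{h2}g(\zeta)$ (Remark \ref{Dhj}), and a direct expansion of the expression in the proof of Theorem \ref{t0} yields
$$\frac{v''(0)}{2}=(1-\rho^2)\Big[(1+\rho^2)\big(|c_0|^2-1\big)+2\rho\,{\rm Re}(\omega c_1)\Big],$$
for an explicit unimodular $\omega$ determined by $\gamma'(0)$ and $\varphi_{-\rho}$; the bound ${\rm Re}(\omega c_1)\le|c_1|$ recovers Theorem \ref{t0}. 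Substituting, $k_h(\zeta,\p\Om(rf))$ becomes an explicit function of $C_{rf,\zeta}$, of $\rho$, and of the single real quantity ${\rm Re}(\omega c_1)$, into which all remaining freedom is concentrated.

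The heart of the matter is to bound ${\rm Re}(\omega c_1)$ sharply. The universal estimate $|c_1|\le 1-|c_0|^2$ produces only the non-sharp factors $(1\pm\rho)^2$, expressed through $\rho=|\zeta|$ rather than through $r$. To reach the sharp factors $\frac{1\pm r}{1\mp r}$, I would exploit the hyperbolic convexity of $\Om(rf)$ via the sharp inequality \cite[Theorem 5]{MaMin}: because $g=rf$ takes values in the hyperbolic disc $\{|w|\le r\}$, of hyperbolic radius $\frac12\log\frac{1+r}{1-r}$, the admissible range of ${\rm Re}(\omega c_1)$ for fixed first-order data is governed by constants that are precisely $\frac{1\pm r}{1\mp r}$. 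Inserting the two endpoints of this range into the formula above gives the upper and lower bounds of \eqref{khr}. For sharpness I would take $f$ to be the Mej\'ia--Pommerenke fixed point function \cite{MejPom}, for which $\p\Om(rf)$ is an explicit arc of a Euclidean circle whose constant hyperbolic curvature, computed from Remark \ref{krrr}, together with the matching value of $C_{rf,\zeta}$, attains each bound; the degenerate case $f\equiv\sigma\in\dt$ gives $\p\Om(rf)=r\dt$, $C_{rf,\zeta}=\frac{1-r^2}{r}$, collapsing both bounds to $r+\frac1r$.

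I expect the main obstacle to be twofold. First, carrying out the exact second-order expansion cleanly, so that only ${\rm Re}(\omega c_1)$ survives and the remaining coefficients organize themselves into $C_{rf,\zeta}$ and its reciprocal (the phase bookkeeping coming from $\gamma'(0)$ and from $\varphi_{-\rho}$ is delicate, and the coupling between $\omega$ and $\arg c_0$ is exactly what makes the estimate sharp). Second, and most importantly, matching the sharp constants of \cite[Theorem 5]{MaMin} so that the containment $|rf|\le r$ yields precisely the factors $\frac{1\pm r}{1\mp r}$ rather than a weaker bound in $\rho$, and then verifying that equality is transported to the Mej\'ia--Pommerenke extremal curve through Remark \ref{krrr}.
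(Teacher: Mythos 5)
Your overall strategy --- express $k_h(\zeta,\p\Om(rf))$ through second-order data and then invoke the sharp Ma--Minda inequality, with sharpness checked on an explicit example --- points at the right ingredients, but the central step is not justified and, as stated, would fail. The inequality \cite[Theorem 5]{MaMin} is a statement about \emph{hyperbolically convex univalent functions}, i.e.\ conformal maps of $\D$ onto hyperbolically convex subdomains of $\D$. You propose to apply it to $g=rf$ on the grounds that $g(\D)$ lies in the hyperbolic disc $\{|w|\le r\}$; but $rf$ is in general neither univalent nor a conformal map onto a hyperbolically convex domain, so that theorem does not apply to it, and mere containment of the image in a disc gives only Schwarz--Pick-type control of $c_1$ --- which, as you yourself note, produces constants in $\rho=|\zeta|$ rather than the sharp constants $\frac{1\pm r}{1\mp r}$. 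The object to which the Ma--Minda inequality must be applied is the Mej\'ia--Pommerenke fixed point function: the conformal map $\psi:\D\to\Om(f)$ with $\psi(0)=0$, $\psi'(0)=f(0)$ and $wf(\psi(w))=\psi(w)$, which is hyperbolically convex because $\Om(f)$ is, and which satisfies $\Om(rf)=\psi(r\D)$. Your proposal brings $\psi$ in only at the very end, for sharpness, and never performs the change of variables from the data of $f$ at $\zeta$ to the data of $\psi$ at the point $w$ with $|w|=r$, $\psi(w)=\zeta$.

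That change of variables is exactly what the paper's proof does and what your outline is missing: differentiating the fixed point equation gives $C_{rf,\zeta}=\frac{1-|\psi(w)|^2}{r|\psi'(w)|}$, the hyperbolic curvature of $\psi(r\dt)$ at $\psi(w)$ equals ${\rm Re}\,p(w)\cdot C_{rf,\zeta}$ with $p(z)=1+\frac{z\psi''(z)}{\psi'(z)}+\frac{2z\psi'(z)\ov{\psi(z)}}{1-|\psi(z)|^2}$, and then \cite[Theorem 5]{MaMin} applied to $\psi$ at $|w|=r$ delivers $\left|{\rm Re}\,p(w)-\frac{1+r^2}{1-r^2}\right|\le\frac{2r}{1-r^2}-\frac{2(1-r^2)}{rC_{rf,\zeta}^2}$, which rearranges to \eqref{khr}. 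Without routing the argument through $\psi$, the parameter $r$ never enters as the modulus of a preimage point under a hyperbolically convex map, and there is no mechanism in your proposal that produces the factors $\frac{1-r}{1+r}$ and $\frac{1+r}{1-r}$. So the gap is concrete: the sharp two-sided bound on the second-order term is asserted by appeal to a theorem that does not cover the function you apply it to.
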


\begin{proof} In view of \cite[Section 3]{MejPom}, there exists a conformal map $\psi:\D\to\Om(f)$ such that $\psi(0)=0$, $\psi'(0)=f(0)$ and, for every $w\in\D$, $\psi(w)$ is the unique fixed point of $wf$, i.e., $wf(\psi(w))=\psi(w)$. Hence, $\Om(rf)=\psi(r\D)$. Let $\zeta=\psi(w)\in\p\Om(rf)=\psi(r\dt)$, $|w|=r$. Then $k_h(\zeta,\Om(rf))=k_h(\psi(w),\psi(r\dt))$. Since $r|f(\zeta)|=|\zeta|$, one can easily prove, using \cite[(3.3)]{MejPom}, that
\begin{equation}
\label{eqw}
C_{rf,\zeta}=\frac{1-|\psi(w)|^2}{r|\psi'(w)|}.
\end{equation}

Let 
\begin{equation*}
    p(z)=1+\frac{z\psi''(z)}{\psi'(z)}+\frac{2z\psi'(z)\ov{\psi(z)}}{1-|\psi(z)|^2},\quad z\in\D.
\end{equation*}

By the proof of \cite[Theorem 3, p. 88]{MaMin} (cf. \cite[(6.3)]{Kou1}) and \eqref{eqw},
\begin{equation}
\label{kCf}
{\rm Re}\,p(w)=\frac{k_h(\zeta,\pD\Om(rf))}{C_{rf,\zeta}}
\end{equation}
and (see Remark \ref{Dhj})
\begin{equation}
\label{ppp}
p(w)=\frac{w}{1-|w|^2}\frac{D_{h2}\psi(w)}{D_{h1}\psi(w)}+\frac{1+|w|^2}{1-|w|^2}\;\;\mbox{ and }\;\; C_{rf,\zeta}=\frac{1-r^2}{r|D_{h1}\psi(w)|}.
\end{equation}

Since $\Om(f)$ is hyperbolically convex, $\psi$ is hyperbolically convex, and thus, by \cite[Theorem 5]{MaMin} (see also \cite[p. 29]{Kou1}),
\begin{equation}
\label{kCfineq}
    \left|p(w)-\frac{1+|w|^2}{1-|w|^2}\right|\le \frac{2|w|}{1-|w|^2}\left(1-\left(\frac{(1-|w|^2)|\psi'(w)|}{1-|\psi(w)|^2}\right)^2\right).
\end{equation}
Hence, using \eqref{eqw} and \eqref{kCfineq},
\begin{equation}
\label{kCfineq2}
    \left|{\rm Re}\,p(w)-\frac{1+r^2}{1-r^2}\right|\le \frac{2r}{1-r^2}-\frac{2(1-r^2)}{rC_{rf,\zeta}^2}.
\end{equation}
Hence, by \eqref{kCf}, \eqref{khr} holds.

The equalities hold in \eqref{khr}, if $f\equiv \sigma\in\dt$ (see Remark \ref{krrr}).
\end{proof}

In the following, we point out upper and lower sharp bounds for the hyperbolic curvature of $\p\Om(rf)$, depending only on $r$ and $|f(0)|$, using the function 
\begin{align}
    \label{ka}
    \begin{split}
    k_{\alpha}(z)&=\frac{2\alpha z}{1-z+\sqrt{(1-z)^2+4\alpha^2 z}}\\
    &=\alpha z+\alpha(1-\alpha^2)z^2+\ldots, \quad z\in\D,
    \end{split}
    \end{align}
where $\alpha\in(0,1]$. $k_{\alpha}$ is a conformal map of $\D$ onto
$$\Om=\left\{z\in\D:\left|z+\frac{1}{\alpha}\right|>\sqrt{\frac{1}{\alpha^2}-1}\right\}$$ with $k_{\alpha}^{-1}(z)=g_\alpha(z)$, $z\in\Om$, where $g_\alpha(z)=\frac{z(1+\alpha z)}{\alpha+z},z\in\D$ (see \cite[Example 1]{MaMin}). 

\begin{corollary}
\label{khra}
    Let $f:\D\to\ov{\D}$ be holomorphic such that $f(0)\neq0$. Then, for all $r\in(0,1)$ and $\zeta\in\p\Om(rf)$,
$$ \frac{(1+r)^2}{rD_{h1}k_{\alpha}(-r)}-2D_{h1}k_\alpha(-r)\ge k_h(\zeta,\p\Om(rf))\ge\frac{(1-r)^2}{rD_{h1}k_{\alpha}(r)}+2D_{h1}k_\alpha(-r),$$
where $\alpha=|f(0)|$, $D_{h1}k_{\alpha}(-r)=\frac{\alpha(1-r)}{\sqrt{(1+r)^2-4\alpha^2r}}$, $D_{h1}k_{\alpha}(r)=\frac{\alpha(1+r)}{\sqrt{(1-r)^2+4\alpha^2r}}$. The inequalities are sharp.
\end{corollary}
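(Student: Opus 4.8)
The plan is to reduce everything to a sharp two-sided distortion estimate for the hyperbolic derivative of the Mej\'ia--Pommerenke fixed-point map, and then to optimize the bounds of Theorem~\ref{tkr} over the resulting range. Recall from that proof that $\Om(rf)=\psi(r\D)$ with $\psi$ hyperbolically convex, $\psi(0)=0$, $\psi'(0)=f(0)$, and that for $\zeta=\psi(w)$, $|w|=r$, one has $C_{rf,\zeta}=\frac{1-r^2}{r\,|D_{h1}\psi(w)|}$ by \eqref{ppp}. Since $|D_{h1}\psi(0)|=|\psi'(0)|=\alpha$, the whole corollary hinges on controlling $d:=|D_{h1}\psi(w)|$ for $|w|=r$ purely in terms of $r$ and $\alpha$.

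The key step, which I expect to be the main obstacle, is the sharp distortion estimate
\[
D_{h1}k_\alpha(-r)\le |D_{h1}\psi(w)|\le D_{h1}k_\alpha(r),\qquad |w|=r,
\]
for hyperbolically convex $\psi$ with $\psi(0)=0$, $|\psi'(0)|=\alpha$, with $k_\alpha$ extremal. I would extract it from the same Ma--Minda inequality \eqref{kCfineq} already invoked in Theorem~\ref{tkr}. Differentiating $\log d$ along a ray $w=\rho e^{i\theta}$ and using the expression for $p$ from the proof of Theorem~\ref{tkr} gives the identity
\[
\frac{\partial}{\partial\rho}\log d=\frac{{\rm Re}\,p(w)-1}{\rho}-\frac{2\rho}{1-\rho^2}.
\]
Feeding in the two-sided bound on ${\rm Re}\,p$ from \eqref{kCfineq} collapses this to $\bigl|\frac{\partial}{\partial\rho}\log d\bigr|\le\frac{2(1-d^2)}{1-\rho^2}$. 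Separating variables, with $\int\frac{{\rm d}d}{d(1-d^2)}=\log\frac{d}{\sqrt{1-d^2}}$ and $\int\frac{2\,{\rm d}\rho}{1-\rho^2}=\log\frac{1+\rho}{1-\rho}$, and integrating from $0$ to $r$ (initial value $d(0)=\alpha$, legitimate since $p(0)=1$) yields
\[
\frac{\alpha}{\sqrt{1-\alpha^2}}\,\frac{1-r}{1+r}\le \frac{d}{\sqrt{1-d^2}}\le \frac{\alpha}{\sqrt{1-\alpha^2}}\,\frac{1+r}{1-r},
\]
which rearranges exactly to the displayed estimate; the extreme values are $D_{h1}k_\alpha(\mp r)$, and equality forces equality in \eqref{kCfineq} along the whole ray, i.e. $\psi=k_\alpha$ up to rotation.

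Granting this, I would rewrite the two sides of \eqref{khr} as functions of $d$ through $C_{rf,\zeta}=\frac{1-r^2}{rd}$: the upper side becomes $\frac{(1+r)^2}{rd}-2d$ and the lower side $\frac{(1-r)^2}{rd}+2d$ (using $\frac{2(1-r^2)}{r}\frac1C=2d$). The upper side is strictly decreasing in $d$, so on $d\ge D_{h1}k_\alpha(-r)$ it is largest at $d=D_{h1}k_\alpha(-r)$, which gives the stated upper bound. For the lower side I would minorize its two summands separately, using $d\le D_{h1}k_\alpha(r)$ in $\frac{(1-r)^2}{rd}$ and $d\ge D_{h1}k_\alpha(-r)$ in $2d$, whose sum is exactly $\frac{(1-r)^2}{rD_{h1}k_\alpha(r)}+2D_{h1}k_\alpha(-r)$.

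Finally, the closed forms for $D_{h1}k_\alpha(\pm r)$ follow either by inverting $\frac{d}{\sqrt{1-d^2}}=\frac{\alpha}{\sqrt{1-\alpha^2}}\frac{1\pm r}{1\mp r}$ or by computing $k_\alpha(\pm r)$, $k_\alpha'(\pm r)$ directly from \eqref{ka} and substituting into $D_{h1}$. Sharpness is inherited from the distortion estimate: the extremal $k_\alpha$ (coming from $f(z)=\frac{\alpha+z}{1+\alpha z}$, for which $\psi=k_\alpha$) attains the endpoints of the $C_{rf,\zeta}$-range at $w=\mp r$, so neither the $C_{rf,\zeta}$-bounds nor the curvature bounds they feed can be improved; confirming that these endpoints indeed yield the claimed sharpness is the one remaining point I would treat with care.
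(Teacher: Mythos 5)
Your proposal is correct and follows essentially the same route as the paper: the paper's proof is a one-line combination of Theorem \ref{tkr} with the Ma--Minda distortion estimate $D_{h1}k_\alpha(-r)\le|D_{h1}\psi(w)|\le D_{h1}k_\alpha(r)$ (\cite[Corollary, p.~92]{MaMin}) applied to $C_{rf,\zeta}$ via \eqref{ppp}, followed by exactly the monotonicity/term-by-term minorization you describe. The only difference is that you re-derive that cited distortion corollary from \eqref{kCfineq} by your (correct) ODE argument rather than quoting it; note also that the paper settles sharpness simply by checking $f\equiv\sigma\in\dt$ (where $\alpha=1$ and both bounds collapse to $r+1/r$), which disposes of the endpoint issue you flag at the end.
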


\begin{proof}
Combining Theorem \ref{tkr} with \cite[Corollary, p. 92]{MaMin} (applied to $C_{rf,\zeta}$ in \eqref{ppp}), the result follows. The equalities hold, if $f\equiv \sigma\in\dt$.
\end{proof}

\section{Radius of convexity for Jordan level curves}
\label{sec6}

For $f:\D\to\ov{\D}$ holomorphic with $f(0)\neq0$, let 
$$\omega_f=\sup\{r\in(0,1):\Om(rf)\mbox{ is convex}\}.$$
Also, let 
$$\omega=\inf\{\omega_f:f:\D\to\ov{\D}\mbox{ is holomorphic with }f(0)\neq0\}.$$

In \cite[Section 5]{MaMin}, Ma and Minda found that the radius of hyperbolic convexity for univalent self-maps of $\D$ is $2-\sqrt{3}$ (which is also the radius of Euclidean convexity for univalent functions from $\D$ to $\C$, see \cite[Theorem 2.2.22]{GrKo}). For other results regarding radii of hyperbolic convexity, see \cite{MaMin2}. In this section, we exploit again the hyperbolic convexity of the sublevel sets to find the radius $\omega$ of (Euclidean) convexity.

\begin{remark}
i) Using the fixed point function given by \cite{MejPom}, we have that, if $\Om(\rho f)$ is convex for some $\rho\in(0,1]$, then $\Om(rf)$ is convex for every $r\in(0,\rho]$ (see \cite[Lemma 6.3.7]{GrKo}).

ii) Let $\Omega\subset\D$ be a domain containing $0$. By a simple geometric argument, we note that, if $\Omega$ is convex, then $\Omega$ is hyperbolically convex. In view of Example \ref{ex1}, the converse does not hold.
\end{remark}

In the next theorem, which restates Theorem \ref{t04}, we find $\omega$, by using Theorem \ref{tkr} and an inequality due to Ma and Minda \cite{MaMin3}. 

\begin{theorem}
\label{r_conv}
   $\omega=\frac{1}{\sqrt 2}$.
\end{theorem}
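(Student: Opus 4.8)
The plan is to determine $\omega = \frac{1}{\sqrt 2}$ by establishing a lower bound $\omega \ge \frac{1}{\sqrt 2}$ valid for every admissible $f$, and then exhibiting (or identifying) an extremal family showing that the infimum cannot be improved. The key link is the Euclidean curvature: a starlike Jordan domain $\Om(rf)$ containing $0$ is convex precisely when $k_e(\zeta,\p\Om(rf)) \ge 0$ for every $\zeta \in \p\Om(rf)$, so I would translate the convexity question into a sign condition on Euclidean curvature and then pass to hyperbolic curvature via the relation \eqref{kh}, namely $k_e = \lD\bigl(k_h + \frac{\partial \log\lD}{\partial n}\bigr)$, i.e. $(1-|\zeta|^2)k_e(\zeta,\gamma) = k_h(\zeta,\gamma) + 2\,{\rm Re}(n\ov\zeta)$.

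First I would fix $f$ and $r$ and use Theorem \ref{tkr} to get a lower bound for $k_h(\zeta,\p\Om(rf))$ in terms of $C_{rf,\zeta}$, then convert it into a lower bound for $k_e$. Combining the lower bound in \eqref{khr} with \eqref{kh}, the convexity of $\Om(rf)$ will be guaranteed once the quantity $\frac{1-r}{1+r}C_{rf,\zeta} + \frac{2(1-r^2)}{r}\frac{1}{C_{rf,\zeta}} + 2\,{\rm Re}(n\ov\zeta)$ is nonnegative. Since $2\,{\rm Re}(n\ov\zeta) \ge -2|\zeta|$ and, for the starlike boundary, this term is controlled, I would minimize the expression over the admissible range of $C_{rf,\zeta}$ using the AM–GM inequality on the two positive terms; this produces a threshold in $r$. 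The delicate point is that the bound from Theorem \ref{tkr} depends on the specific value $C_{rf,\zeta}$, so I would instead work with the refined estimate of Corollary \ref{khra}, which bounds $k_h$ in terms of $r$ and $\alpha = |f(0)|$ alone via the extremal function $k_\alpha$, together with the Ma–Minda inequality from \cite{MaMin3} controlling the extra term $2\,{\rm Re}(n\ov\zeta)$ for hyperbolically convex maps. The universal threshold $r = \frac{1}{\sqrt 2}$ should emerge as the value below which $k_e \ge 0$ holds for all $\alpha \in (0,1]$ and all boundary points.

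For the matching upper bound $\omega \le \frac{1}{\sqrt 2}$, I would identify the extremal configuration. The natural candidate is a degenerate or limiting rotation, or more precisely the family realizing equality in the relevant Ma–Minda inequality; by the remarks in Section \ref{sec5}, equality in \eqref{khr} is attained when $f \equiv \sigma \in \dt$, so the sublevel sets associated with the hyperbolically convex extremal $k_\alpha$ (in the limit $\alpha \to 1$, i.e. $|f(0)| \to 1$) should force $\Om(rf)$ to lose Euclidean convexity exactly at $r = \frac{1}{\sqrt 2}$. I would verify this by computing, for this extremal $f$ (equivalently, using $\psi = k_\alpha$ as the fixed-point conformal map), the Euclidean curvature at the boundary point where it is smallest, and checking that it vanishes precisely when $r = \frac{1}{\sqrt 2}$, so that $\omega_f \le \frac{1}{\sqrt 2}$ for this $f$ (in the appropriate limit), hence $\omega \le \frac{1}{\sqrt 2}$.

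The main obstacle I anticipate is the sharp control of the Euclidean-curvature correction term $2\,{\rm Re}(n\ov\zeta)$: unlike the hyperbolic curvature, which Theorem \ref{tkr} and Corollary \ref{khra} pin down sharply, this term is not automatically expressed through $C_{rf,\zeta}$ or $|f(0)|$, and it is exactly the interplay between this term and the hyperbolic-curvature bound that singles out $\frac{1}{\sqrt 2}$ rather than the hyperbolic-convexity radius $2-\sqrt 3$. I expect the resolution to come from the inequality of Ma and Minda in \cite{MaMin3}, which presumably bounds a combination $k_h + 2\,{\rm Re}(n\ov\zeta)$ (or equivalently $(1-|\zeta|^2)k_e$) directly for hyperbolically convex functions, thereby furnishing the needed estimate on $k_e$ with the extremal attained by $k_\alpha$; verifying that this extremal gives the clean value $\frac{1}{\sqrt 2}$, uniformly in $\alpha$, is the crux of the computation.
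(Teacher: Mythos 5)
Your skeleton matches the paper's: lower bound from the hyperbolic-curvature estimate of Theorem \ref{tkr} converted to Euclidean curvature via \eqref{kh}, plus an extremal in the $k_\alpha$ family for the upper bound. But both of your key steps have genuine gaps. For the lower bound, any argument that decouples $C_{rf,\zeta}$ from $|\zeta|$ fails to reach $\frac{1}{\sqrt2}$. If you minimize $\frac{1-r}{1+r}C+\frac{2(1-r^2)}{r}\frac1C$ by AM--GM and use $|\zeta|\le r$, the resulting condition is $2(1-r)^2\ge r^3$, which already fails near $r\approx 0.64$; and the variant via Corollary \ref{khra} together with a separate growth bound $|\zeta|\le k_\alpha(r)$ is still negative at $r=\frac1{\sqrt2}$ (the minimum of $|D_{h1}\psi|$ and the maximum of $|\psi|$ occur at different boundary points, so this estimate is far from sharp). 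The paper instead keeps the two quantities coupled: writing $C_{rf,\zeta}=\frac{1-r^2}{r|D_{h1}\psi(w)|}$ and $|\zeta|=|\psi(w)|$ via \eqref{ppp}, the numerator of the curvature bound becomes $(1-r)^2+2r\bigl(|D_{h1}\psi(w)|^2-|D_{h1}\psi(w)\cdot\psi(w)|\bigr)$, and the inequality actually used from \cite{MaMin3} is the pointwise distortion estimate $(|w|-|\psi(w)|^2)|D_{h1}\psi(w)|\ge(1-|w|)|\psi(w)|$ --- not a direct bound on $k_h+2\,{\rm Re}(n\ov\zeta)$ as you conjecture. This turns the numerator into $(1-r)^2+2(x^2-(2r-1)x)\ge(1-r)^2-2(r-\tfrac12)^2=\tfrac12-r^2$ with $x=|D_{h1}\psi(w)\cdot\psi(w)|$, which is exactly where $\frac1{\sqrt2}$ comes from.

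For the upper bound, your proposed extremal limit $\alpha\to1$ is the wrong direction: $k_1(z)=z$ and $f\equiv\sigma\in\dt$, so $\Om(rf)=r\D$ is a disc and never loses convexity; equality in \eqref{khr} identifies the case where both bounds collapse, not the case of minimal Euclidean curvature. The paper takes the automorphism $f_{\alpha_0}=-\varphi_{-\alpha_0}$ with the specific value $\alpha_0=\frac{\sqrt{\sqrt2+2}}{2}$, for which $\Om(rf_{\alpha_0})=k_{\alpha_0}(r\D)$, and computes $k_e$ explicitly at the boundary point $k_{\alpha_0}(-r)$, showing it is negative for every $r>\frac1{\sqrt2}$. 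Without identifying a correct extremal (and $\alpha_0$ is not obvious a priori), the matching upper bound is not established.
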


\begin{proof}
First, we prove $\omega\ge\frac{1}{\sqrt 2}$. Let $f:\D\to\ov{\D}$ be holomorphic with $f(0)\neq0$. Also, let $r\in(0,1)$ and $\zeta\in\p\Om(rf)$.
From \eqref{kh} and the lower bound in Theorem \ref{tkr}, we have
\begin{align*}
\begin{split}
k_e(\zeta,\p\Om(rf))&\ge \frac{k_h(\zeta,\p\Om(rf))-2|\zeta|}{1-|\zeta|^2}\\
&\ge \frac{1}{1-|\zeta|^2}\left(\frac{1-r}{1+r}C_{rf,\zeta}+\frac{2(1-r^2)}{r}\frac{1}{C_{rf,\zeta}}-2|\zeta|\right)\\
&=\frac{(1-r)^2+2r(|D_{h1}\psi(w)|^2-|D_{h1}\psi(w)\cdot\psi(w)|)}{r(1-|\zeta|^2)|D_{h1}\psi(w)|},
\end{split}
\end{align*}
where, at the end, we use \eqref{ppp}, keeping the notations from the proof of Theorem \ref{tkr}. From \cite[p. 283]{MaMin3}, we have the following inequality
$$(|w|-|\psi(w)|^2)|D_{h1}\psi(w)|\ge(1-|w|)|\psi(w)|,$$
which is equivalent with
$$r(|D_{h1}\psi(w)|^2-|D_{h1}\psi(w)\cdot\psi(w)|)\ge x^2-(2r-1)x,$$
where $x=|D_{h1}\psi(w)\cdot\psi(w)|$. Since $x^2-(2r-1)x\ge-\left(r-\frac{1}{2}\right)^2$,  we deduce that 
$k_e(\zeta,\p\Om(rf))\ge0,$ for all $\zeta\in\p\Om(rf)$,
if $r\in(0,\frac{1}{\sqrt 2}]$. So, $\omega_f\ge\frac{1}{\sqrt 2}$.

Next, we shall prove that there exists $f:\D\to\D$ holomorphic with $f(0)\neq0$ such that $\omega_{f}<r$, for every $r\in(\frac{1}{\sqrt 2},1)$. Let $\alpha\in(0,1]$ and $k_{\alpha}$ be given by \eqref{ka}. Let $f_\alpha=-\varphi_{-\alpha}$ and $r\in(0,1)$. In view of \cite{MejPom}, $k_\alpha$ is the fixed point function of $f_{\alpha}$, and thus $\Omega(rf_{\alpha})=k_\alpha(r\D)$. Let $\gamma(t)=k_\alpha(re^{it}), t\in[0,2\pi)$. Then $\gamma$ is a regular parametrization of $\p\Om(rf_\alpha)$ and, using the formula for the Euclidean curvature $k_e$ (see Section \ref{sec1}),  
$$k_e(\gamma(\pi),\p\Om(rf_\alpha))=\frac{{\rm Im}(\gamma''(\pi)\ov{\gamma'(\pi)})}{|\gamma'(\pi)|^3}=\frac{k_\alpha'(-r)-rk_\alpha''(-r)}{r[k_{\alpha}'(-r)]^2},$$
where we use $k_\alpha'(-r)>0$ ($k_\alpha$ is increasing on $(-1,1)$). 
Let $\alpha_0=\frac{\sqrt{\sqrt{2}+2}}{2}$. Since $k_{\alpha_0}(r)=\frac{1}{2\alpha_0}\left(\sqrt{r^2+\sqrt{2}r+1}-1+r\right)$, we get
\begin{align*}
k_e(\gamma(\pi),\p\Om(rf_{\alpha_0}))&=\frac{\left(r^2-\sqrt{2}r+1\right)^{3/2}-r^3 +\frac{3\sqrt{2}}{2} r^2-\frac{5}{2}r+\frac{1}{\sqrt{2}}}{2\alpha_0r[k_{\alpha_0}'(-r)]^2\left(r^2-\sqrt{2}r+1\right)^{3/2}}\\
&=\frac{\left(t^2+\frac{1}{2}\right)^{3/2}-t^3-t-\frac{1}{2\sqrt{2}}}{2\alpha_0r[k_{\alpha_0}'(-r)]^2\left(r^2-\sqrt{2}r+1\right)^{3/2}}<0,
\end{align*}
for $t=r-\frac{1}{\sqrt2}>0$. 
Hence, for every $r\in\left(\frac{1}{\sqrt 2},1\right)$, $\Om(rf_{\alpha_0})$ is not convex.

\end{proof}

Let $\mathcal{K}_h$ denote the family of hyperbolically convex self-maps of $\D$ that fix the origin. As we have seen, to every $f:\D\to\ov\D$ holomorphic with $f(0)\neq0$ there corresponds a fixed point function $\psi\in\mathcal{K}_h$ with $\psi(\D)=\Om(f)$, given by $\psi^{-1}(z)=\frac{z}{f(z)}$,  $z\in\Om(f)$, in view of \cite[(3.2)]{MejPom}. Note that the converse might not hold in general: if $\psi\in\mathcal{K}_h$, then $f(z)=\frac{z}{\psi^{-1}(z)}, z\in\psi(\D)$, might not extend analytically to $\D$. However, retracing the proof of Theorem \ref{r_conv} yields the same radius of (Euclidean) convexity for $\mathcal{K}_h$.

\begin{theorem}
    $\displaystyle \inf_{\psi\in\mathcal{K}_h}\sup\{r\in(0,1):\psi(r\D)\mbox{ is convex}\}=\frac{1}{\sqrt{2}}.$
\end{theorem}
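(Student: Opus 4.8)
The plan is to observe that this statement is essentially a by-product of the machinery already built for Theorem \ref{r_conv}, once one notices that the fixed-point function $\psi$ there enters the curvature estimates only through its being a hyperbolically convex conformal self-map of $\D$ fixing the origin, i.e. only through $\psi\in\mathcal{K}_h$. Writing $R(\psi):=\sup\{r\in(0,1):\psi(r\D)\mbox{ is convex}\}$, I would first record the bookkeeping fact that for the fixed-point function $\psi$ of an admissible $f$ one has $\psi(r\D)=\Om(rf)$, so $R(\psi)=\omega_f$; in particular every fixed-point function already lies in $\mathcal{K}_h$, so $\mathcal{K}_h$ \emph{contains} the family over which $\omega$ was taken. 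The real content is therefore to run the lower estimate of Theorem \ref{r_conv} for an \emph{arbitrary} $\psi\in\mathcal{K}_h$.

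For the bound $R(\psi)\ge\frac{1}{\sqrt2}$ for every $\psi\in\mathcal{K}_h$ (hence $\inf_{\psi\in\mathcal{K}_h}R(\psi)\ge\frac{1}{\sqrt2}$), I would fix $\psi\in\mathcal{K}_h$, take $r\in(0,1)$ and $w$ with $|w|=r$, set $\zeta=\psi(w)$, and reproduce the chain from the proofs of Theorems \ref{tkr} and \ref{r_conv} applied directly to the curve $\psi(r\dt)$. The identities \eqref{kCf} and \eqref{ppp} relating ${\rm Re}\,p(w)$, $k_h(\zeta,\psi(r\dt))$ and $\tfrac{1-r^2}{r|D_{h1}\psi(w)|}$ hold for \emph{any} conformal $\psi$ with $\psi(0)=0$; inequality \eqref{kCfineq} is exactly the Ma--Minda characterization \cite[Theorem 5]{MaMin} of hyperbolically convex maps; and the inequality $(|w|-|\psi(w)|^2)|D_{h1}\psi(w)|\ge(1-|w|)|\psi(w)|$ of \cite{MaMin3} is likewise valid for hyperbolically convex $\psi$. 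Feeding these into $k_e(\zeta,\psi(r\dt))\ge\frac{k_h(\zeta,\psi(r\dt))-2|\zeta|}{1-|\zeta|^2}$ exactly as in Theorem \ref{r_conv} gives $k_e(\zeta,\psi(r\dt))\ge0$ for all $\zeta$ whenever $r\le\frac{1}{\sqrt2}$, so $\psi(r\D)$ is convex for such $r$ and $R(\psi)\ge\frac{1}{\sqrt2}$.

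For the reverse inequality $\inf_{\psi\in\mathcal{K}_h}R(\psi)\le\frac{1}{\sqrt2}$ it suffices to exhibit one member attaining $\frac{1}{\sqrt2}$. With $\alpha_0=\frac{\sqrt{\sqrt2+2}}{2}$ and $f_{\alpha_0}=-\varphi_{-\alpha_0}\in{\rm Aut}(\D)$, the function $k_{\alpha_0}$ of \eqref{ka} is the fixed-point function of $f_{\alpha_0}$ (so $k_{\alpha_0}(r\D)=\Om(rf_{\alpha_0})$), is a conformal self-map with $k_{\alpha_0}(0)=0$, and maps $\D$ onto $\Om(f_{\alpha_0})$, which by Example \ref{ex1} (case $\la=1$) is bounded by a hyperbolic geodesic and is therefore a hyperbolically convex half-plane; thus $k_{\alpha_0}\in\mathcal{K}_h$. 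The explicit sign computation $k_e(\gamma(\pi),\pD\Om(rf_{\alpha_0}))<0$ carried out in the proof of Theorem \ref{r_conv} shows $k_{\alpha_0}(r\D)$ is not convex for $r\in(\frac{1}{\sqrt2},1)$, so $R(k_{\alpha_0})\le\frac{1}{\sqrt2}$. Combined with the previous paragraph this forces $R(k_{\alpha_0})=\frac{1}{\sqrt2}$ and hence the claimed equality.

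The one genuinely delicate point, which I would write most carefully, is the transfer in the second paragraph: I must verify that none of \eqref{kCf}, \eqref{ppp}, \eqref{kCfineq}, nor the \cite{MaMin3} inequality secretly used the fixed-point structure of $\psi$, but only that $\psi$ is a hyperbolically convex conformal self-map of $\D$ with $\psi(0)=0$. Since this is indeed the case, the lower estimate of Theorem \ref{r_conv} is valid verbatim for every $\psi\in\mathcal{K}_h$, and in particular no separate (and potentially awkward) argument realizing an arbitrary $\psi\in\mathcal{K}_h$ as a fixed-point function of some admissible $f$ is required.
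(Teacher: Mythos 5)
Your proposal is correct and is essentially the paper's own argument: the paper proves this theorem simply by observing that the proof of Theorem \ref{r_conv} applies verbatim, since the lower-bound chain uses only that $\psi$ is a hyperbolically convex conformal self-map fixing the origin, and the extremal function $k_{\alpha_0}$ lies in $\mathcal{K}_h$. Your explicit verification that \eqref{kCf}, \eqref{ppp}, \eqref{kCfineq} and the \cite{MaMin3} inequality do not depend on the fixed-point structure is exactly the (unstated) content behind the paper's one-line justification.
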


\begin{remark}
 According to \cite{MejPom0}, if $f\in\mathcal{K}_h$, then $\frac{1}{f'(0)}f\in S^*(1/2)$ (i.e., $\frac{1}{f'(0)}f$ is a starlike function of order $1/2$). In view of \cite[Theorem 1]{MacG}, the radius of convexity for $S^*(1/2)$ is $\sqrt{2\sqrt{3}-3}$ (which is less than $\frac{1}{\sqrt{2}}$).  
 
 Note that, by \cite[Theorem 2]{MacG}, $\frac{1}{\sqrt{2}}$ is the radius of starlikeness for normalized holomorphic functions $f:\D\to\C$ with ${\rm Re}\,\frac{f(z)}{z}>\frac{1}{2}$, $z\in\D$. $\frac{1}{\sqrt 2}$ is also known as the value that splits the sharp bound in the Rotation Theorem (see \cite[Theorem 3.2.6]{GrKo}). We don’t know if this
is a coincidence or if there is a deeper connection here.
\end{remark}

\section{Hyperbolic area and hyperbolic perimeter estimates for sublevel sets}
\label{sec7}

In this section, we give some estimates for the total hyperbolic curvature, the hyperbolic area and the hyperbolic perimeter of $\Oml(f)$; see \cite{Kou1,Kou2} for some estimates involving these quantities for hyperbolically convex functions.

\begin{proposition}
\label{p11}
    Let $f:\D\to\D$ be holomorphic and $\la\ge1$. Then
        \begin{equation*}
        A_h(\Oml(f))\ge\max\left\{\pi(\la-1),\frac{\pi}{2}\left(\frac{1}{\sqrt{1-|f(0)|^2}}-1\right)\right\}
    \end{equation*}
    and, for $\partial _{\mathbb{D}}\Omega _{\lambda }(f)\neq\emptyset$,
    \begin{equation*}
        L_h(\pD\Oml(f))\ge\max\left\{2\pi\sqrt{\la(\la-1)},\frac{\pi |f(0)|}{\sqrt{1-|f(0)|^2}}\right\}.
    \end{equation*}
    
    In particular, $\frac{1}{\la^p}A_h(\Oml(f))\to\infty$ and, for $f$ not a finite Blaschke product,  $\frac{1}{\la^p}L_h(\pD\Oml(f))\to\infty$, as $\la\to\infty$, for every $p<1$.
\end{proposition}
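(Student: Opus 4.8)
The plan is to reduce both perimeter estimates to the corresponding area estimates via the hyperbolic isoperimetric inequality \eqref{isop}, and to obtain each area estimate by exhibiting an explicit Euclidean disc centered at the origin inside $\Oml(f)$ and computing its hyperbolic area. The only facts I need are that $\{|z|<s\}$ is a hyperbolic disc with $\iint_{|z|<s}\lD^2\,{\rm d}A=\pi\frac{s^2}{1-s^2}$, and that $A_h$ is monotone under inclusion (being the integral of a positive density); thus any inclusion $\{|z|<s\}\subseteq\Oml(f)$ forces $A_h(\Oml(f))\ge\pi\frac{s^2}{1-s^2}$.

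For the bound $A_h(\Oml(f))\ge\pi(\la-1)$, I would first check that $\{|z|^2<\tfrac{\la-1}{\la}\}\subseteq\Oml(f)$: writing $u(z)=|f(z)|^2-\la|z|^2+\la-1$ as in Section \ref{sec2}, if $|z|^2<\tfrac{\la-1}{\la}$ then $\la-1-\la|z|^2>0$, so $u(z)\ge\la-1-\la|z|^2>0$. Taking $s^2=\tfrac{\la-1}{\la}$ gives $\pi\frac{s^2}{1-s^2}=\pi(\la-1)$ (this is trivial, and correctly so, when $\la=1$). For the bound involving $|f(0)|$ I would use $\Oml(f)\supseteq\Om(f)$ (valid since $\la\ge1$) and bound $A_h(\Om(f))$ from below, assuming $f(0)\neq0$ since the estimate is otherwise trivial. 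Put $\alpha=|f(0)|$. The key step is to localize $\pD\Om(f)$: for $\zeta\in\pD\Om(f)$ one has $|f(\zeta)|=|\zeta|$, and the Schwarz--Pick lemma applied to $f$ (with $w=0$) gives $\frac{||\zeta|-\alpha|}{1-\alpha|\zeta|}\le\left|\frac{f(\zeta)-f(0)}{1-\ov{f(0)}f(\zeta)}\right|\le|\zeta|$. When $|\zeta|\ge\alpha$ this is automatic and, since $s_0:=\frac{1-\sqrt{1-\alpha^2}}{\alpha}\le\alpha$, one has $|\zeta|\ge s_0$; when $|\zeta|<\alpha$ the inequality reduces to $\alpha(1+|\zeta|^2)\le2|\zeta|$, i.e. again $|\zeta|\ge s_0$. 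Hence every boundary point has modulus $\ge s_0$, so the connected disc $\{|z|<s_0\}$, which contains $0\in\Om(f)$ and meets no point of $\pD\Om(f)$, lies in $\Om(f)$; computing $\pi\frac{s_0^2}{1-s_0^2}$ gives exactly $\frac{\pi}{2}\bigl(\frac{1}{\sqrt{1-\alpha^2}}-1\bigr)$. This Schwarz--Pick localization of the boundary is the only nonroutine point; the rest is direct substitution.

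Finally, the perimeter bounds follow by applying the increasing map $A\mapsto\sqrt{4\pi A+4A^2}$ to \eqref{isop}. If $\pD\Oml(f)$ is a Jordan curve, then $\Oml(f)$ is its interior and \eqref{isop} applies: inserting $A_h\ge\pi(\la-1)$ gives $L_h^2\ge4\pi^2\la(\la-1)$, while inserting $A_h\ge\frac{\pi}{2}\bigl(\frac{1}{\sqrt{1-\alpha^2}}-1\bigr)$ gives $L_h^2\ge\pi^2\frac{\alpha^2}{1-\alpha^2}$; these are precisely the two stated lower bounds. In the remaining case $\pD\Oml(f)$ is a union of analytic arcs terminating on $\partial\D$, so $\Oml(f)$ abuts $\partial\D$ and both $A_h(\Oml(f))$ and $L_h(\pD\Oml(f))$ are infinite, making every inequality trivial. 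The concluding assertion is then immediate from $A_h(\Oml(f))\ge\pi(\la-1)$ and $L_h(\pD\Oml(f))\ge2\pi\sqrt{\la(\la-1)}$, since $\la^{-p}(\la-1)\to\infty$ and $\la^{-p}\sqrt{\la(\la-1)}\to\infty$ as $\la\to\infty$ for every $p<1$.
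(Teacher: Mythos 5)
Your argument is correct and lands on exactly the stated constants, but it differs from the paper's proof in two places worth noting. For the $|f(0)|$-dependent area bound the paper passes through the Mej\'ia--Pommerenke fixed-point function $\psi$ and invokes the covering theorem for hyperbolically convex maps \cite[Theorem 2]{MaMin} to get $r\D\subset\Om(f)$ with $r=\frac{|f(0)|}{1+\sqrt{1-|f(0)|^2}}$; you produce the same disc (note $\frac{1-\sqrt{1-\alpha^2}}{\alpha}=\frac{\alpha}{1+\sqrt{1-\alpha^2}}$) by applying Schwarz--Pick directly to $f$ at boundary points where $|f(\zeta)|=|\zeta|$, which is more elementary and avoids the fixed-point function entirely. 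For the perimeter bounds the paper computes the hyperbolic length of the boundary circles of the included discs and appeals to monotonicity of hyperbolic perimeter under inclusion for hyperbolically convex sets (stated without reference); you instead feed the area bounds into the isoperimetric inequality \eqref{isop}. Since \eqref{isop} is an equality for circles and both inclusions are of round discs centered at $0$, the two routes yield identical constants; yours trades that monotonicity lemma for the Jordan/non-Jordan dichotomy of $\pD\Oml(f)$, which you handle. Two caveats: your claim that $A_h(\Oml(f))=\infty$ whenever the boundary arcs reach $\partial\D$ is not justified (a hyperbolically convex region touching $\partial\D$ can have finite hyperbolic area, e.g.\ an ideal triangle has area $\pi/4$ for curvature $-4$), but this is harmless since your area bounds come from the inclusions and only $L_h=\infty$ is needed in that case, which does hold because any arc reaching $\partial\D$ has infinite hyperbolic length; and neither you nor the paper treats the degenerate case $\pD\Oml(f)=\emptyset$ (e.g.\ $f=\mathrm{id}$, $\la>1$), where the perimeter inequality as literally stated is problematic --- this is an issue with the proposition itself rather than with your proof.
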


\begin{proof}
 If $f(0)=0$ and $\la=1$, there is nothing to prove. Let $\la>1$. Then $r_\la\D\subseteq\Oml(f)$ with $r_\la=\sqrt{1-\frac{1}{\la}}$, so 
 $$A_h(\Oml(f))\ge \iint_{r_\la\D}\la^2_\D(z){\rm d}A(z)=\frac{\pi r_\la^2}{1-r_\la^2}=\pi(\la-1)$$
 and
  $$L_h(\pD\Oml(f))\ge \int_{r_\la\dt}\la_\D(z){\rm d}z=\frac{2\pi r_\la}{1-r_\la^2}=2\pi\sqrt{\la(\la-1)},$$
  where we use the monotonicity of the hyperbolic perimeter with respect to inclusion for hyperbolically convex sets (easily seen to hold by intersecting with hyperbolic half-planes and using the triangle inequality).
Note that, if $\p\Oml(f)\cap\p\D\neq\emptyset$, then $\pD\Oml(f)$ is a union of Jordan arcs with endpoints on $\p\D$, so $L_h(\pD\Oml(f))=\infty$, while, if $\p\Oml(f)\subset\D$, then $\p\Oml(f)$ has finite hyperbolic length. Also, if $\pD\Oml(f)=\emptyset$, then $L_h(\pD\Oml(f))=0$.
 
 Next, let $f(0)\neq0$. In view of \cite[Section 3]{MejPom}, there exists an univalent function $\psi:\D\to\Om(f)$ such that $\psi(0)=0$ and $\psi'(0)=f(0)$. By \cite[Theorem 2]{MaMin},
 $r\D\subset\Om(f)$, where $r=\frac{|f(0)|}{1+\sqrt{1-|f(0)|^2}}$. Since  $\Om(f)\subseteq\Oml(f)$, the proof is complete, by simple computations as above.
\end{proof}

As a consequence, we get a lower bound for the total hyperbolic curvature, related to Fenchel's Theorem \cite[Section 5.7, Theorem 3]{doC}. For sharp bounds, see Corollary \ref{kh_bnds}.

\begin{corollary}
    Let $f:\D\to\ov{\D}$ be holomorphic such that $f(0)\neq0$ and $r\in(0,1)$. Then
$$k_h(\p\Om(rf))>\frac{2\pi}{\sqrt{1-r^2|f(0)|^2}}.$$
\end{corollary}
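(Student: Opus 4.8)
The plan is to turn the total hyperbolic curvature into a hyperbolic area via Gauss--Bonnet and then feed in the area bound from Proposition \ref{p11}. Since $f:\D\to\ov\D$ with $f(0)\neq0$ and $r\in(0,1)$, the set $\Om(rf)$ is bounded by a regular Jordan curve lying in $\D$, so \eqref{GB} applies with $D_\gamma=\Om(rf)$ and gives
$$k_h(\p\Om(rf))=2\pi+4A_h(\Om(rf)).$$
Hence it suffices to bound $A_h(\Om(rf))$ from below.

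Next I would apply Proposition \ref{p11} to the holomorphic self-map $g=rf:\D\to\D$, which is legitimate because $r<1$ forces $g(\D)\subset r\ov\D\subset\D$ and $g(0)=rf(0)\neq0$, taking $\la=1$. The first term $\pi(\la-1)$ then vanishes, and since $|g(0)|=r|f(0)|$ the second term yields
$$A_h(\Om(rf))=A_h(\Om_1(g))\ge\frac{\pi}{2}\left(\frac{1}{\sqrt{1-r^2|f(0)|^2}}-1\right).$$
Substituting this into the Gauss--Bonnet identity and simplifying gives
$$k_h(\p\Om(rf))\ge 2\pi+2\pi\left(\frac{1}{\sqrt{1-r^2|f(0)|^2}}-1\right)=\frac{2\pi}{\sqrt{1-r^2|f(0)|^2}},$$
so the non-strict version is immediate.

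The delicate point, which I expect to be the only real obstacle, is upgrading this to a strict inequality. Recall that the relevant bound in Proposition \ref{p11} arises from the inclusion $\rho\D\subset\Om(rf)$ with $\rho=\frac{r|f(0)|}{1+\sqrt{1-r^2|f(0)|^2}}$ (the Koebe-type disc of \cite[Theorem 2]{MaMin}) together with the area monotonicity $A_h(\Om(rf))\ge A_h(\rho\D)$. Because the hyperbolic area density is strictly positive, this monotonicity is strict unless $\Om(rf)=\rho\D$. To rule that out I would invoke the conformal map $\psi:\D\to\Om(rf)$ with $\psi(0)=0$ and $|\psi'(0)|=r|f(0)|$ furnished by the fixed-point function of Mej\'ia--Pommerenke: if $\Om(rf)$ were the centered disc $\rho\D$, then $\psi/\rho$ would be a disc automorphism fixing $0$, forcing $|\psi'(0)|=\rho$; but $\rho<r|f(0)|$ since $\sqrt{1-r^2|f(0)|^2}>0$ (as $r|f(0)|<1$), a contradiction. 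Hence $\rho\D\subsetneq\Om(rf)$, the area inequality is strict, and the claimed strict bound on $k_h(\p\Om(rf))$ follows.

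Once the Schwarz-type normalization of $|\psi'(0)|$ is in hand, the remainder is bookkeeping: combining \eqref{GB} with Proposition \ref{p11}, and verifying the elementary identity $\frac{\pi}{2}\big(\frac{1}{\sqrt{1-a^2}}-1\big)=\frac{\pi a^2}{2\sqrt{1-a^2}\,(1+\sqrt{1-a^2})}=A_h(\rho\D)$ with $a=r|f(0)|$, which confirms that the area lower bound is exactly $A_h(\rho\D)$ and that the final constant collapses to $\frac{2\pi}{\sqrt{1-r^2|f(0)|^2}}$.
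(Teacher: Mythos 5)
Your proof is correct and follows essentially the same route as the paper: apply Proposition \ref{p11} to $rf$ with $\la=1$ and combine with the Gauss--Bonnet identity \eqref{GB}. Your handling of strictness (ruling out $\Om(rf)=\rho\D$ via the normalization $|\psi'(0)|=r|f(0)|>\rho$ of the Mej\'ia--Pommerenke fixed-point function) is a more explicit version of the paper's appeal to \cite[Theorem 2]{MaMin}, and it is valid.
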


\begin{proof}
    We apply Proposition \ref{p11} to $rf$ and $\la=1$ and we use \eqref{GB}. The inequality is strict, in view of \cite[Theorem 2]{MaMin} (which was used in the proof of Proposition \ref{p11}) and  the fact that $rf$ cannot be an automorphism.
\end{proof}

\begin{proposition}
\label{Lh_low}
    Let $f:\D\to\ov{\D}$ be holomorphic such that $f(0)\neq0$ and $r\in(0,1)$. Then the following sharp inequality holds
    $$L_h(\p\Om(rf))\ge\frac{2\pi r|f(0)|}{(1+r)\sqrt{(1+r)^2-4r|f(0)|^2}}.$$
\end{proposition}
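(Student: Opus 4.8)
The plan is to reduce the hyperbolic perimeter to an integral of the modulus of the hyperbolic derivative of the fixed point function, and then to apply the sharp one-sided distortion estimate of Ma and Minda pointwise. As in the proof of Theorem \ref{tkr}, I would use the fixed point function $\psi:\D\to\Om(f)$ of \cite[Section 3]{MejPom}, which is hyperbolically convex with $\psi(0)=0$, $|\psi'(0)|=|f(0)|=:\alpha$, and satisfies $\Om(rf)=\psi(r\D)$, so that $\p\Om(rf)=\psi(r\dt)$. Parametrizing by $\gamma(t)=\psi(re^{it})$ and recalling $D_{h1}\psi(w)=\frac{(1-|w|^2)\psi'(w)}{1-|\psi(w)|^2}$ from Remark \ref{Dhj}, a direct change of variables gives, since $|w|=r$ on the circle,
\begin{equation*}
L_h(\p\Om(rf))=\int_0^{2\pi}\frac{r|\psi'(re^{it})|}{1-|\psi(re^{it})|^2}\,dt=\frac{r}{1-r^2}\int_0^{2\pi}|D_{h1}\psi(re^{it})|\,dt.
\end{equation*}
This isolates precisely the quantity that the distortion theory controls.

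Next I would invoke the sharp lower bound for the hyperbolic derivative of a hyperbolically convex map, namely the estimate of \cite[Corollary, p. 92]{MaMin} already used to pass from Theorem \ref{tkr} to Corollary \ref{khra}. With $\alpha=|f(0)|$, this yields the pointwise inequality $|D_{h1}\psi(w)|\ge D_{h1}k_\alpha(-|w|)$, where $D_{h1}k_\alpha(-r)=\frac{\alpha(1-r)}{\sqrt{(1+r)^2-4\alpha^2r}}$ as recorded in Corollary \ref{khra}; the point requiring care is that it is the value at $-r$ (the minimum of the hyperbolic derivative) rather than at $+r$ that serves as the lower bound, exactly as it enters the maximization of $C_{rf,\zeta}$ in Corollary \ref{khra}. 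Integrating over the circle then gives
\begin{equation*}
L_h(\p\Om(rf))\ge \frac{r}{1-r^2}\cdot 2\pi\, D_{h1}k_\alpha(-r)=\frac{2\pi r|f(0)|}{(1+r)\sqrt{(1+r)^2-4r|f(0)|^2}},
\end{equation*}
which is the asserted bound.

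For sharpness I would argue as in the earlier sharp results that equality forces $|D_{h1}\psi(re^{it})|$ to coincide with its minimal value $D_{h1}k_\alpha(-r)$ for almost every, hence every, $t$, so that $|D_{h1}\psi|$ is constant on the circle $|w|=r$. Analyzing when this can occur, via the rigidity in the equality case of the distortion theorem, I expect it to force $\psi(z)=\sigma z$ with $|\sigma|=1$, i.e. $f\equiv\sigma\in\dt$; in that case $\Om(rf)=r\D$ and $L_h(r\dt)=\frac{2\pi r}{1-r^2}$ matches the bound at $\alpha=1$, which is the natural witness of sharpness (consistent with the equality cases in the preceding statements).

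The main obstacle I anticipate is the sharpness analysis rather than the inequality itself. Since the Ma--Minda estimate is only pointwise, integrating it is immediate, but establishing that the resulting integral bound is best possible requires the rigidity of the extremal: the half-plane maps $k_\alpha$ realize the pointwise minimum only along a single direction, so for $\alpha<1$ the integrated inequality is in fact strict, and the bound is attained only in the degenerate limit $\alpha=1$. Everything else is a routine change of variables together with the explicit substitution of $D_{h1}k_\alpha(-r)$ from Corollary \ref{khra}.
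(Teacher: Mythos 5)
Your proposal is correct and follows essentially the same route as the paper: the paper's (very terse) proof likewise combines the fixed point function $\psi$ of Mej\'ia--Pommerenke with the pointwise distortion bound of \cite[Corollary, p.~92]{MaMin} applied to $|D_{h1}\psi|$ and the definition of $L_h$, with equality witnessed by $f\equiv\sigma\in\dt$. Your expanded computation and your remark that the bound is attained only in the case $\alpha=1$ are consistent with the paper's stated equality case.
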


\begin{proof}
    Using the fixed point function given by \cite{MejPom}, the inequality follows easily from \cite[Corollary, p. 92]{MaMin} and the definition of $L_h$. The equality holds for $f\equiv\sigma\in\dt$.
\end{proof}

In the following theorem, we point out sharp upper bounds for the hyperbolic area and the hyperbolic perimeter of $\Om(rf)$, $r\in(0,1)$, as immediate consequences of the sharp estimates obtained by Kourou \cite{Kou2}. We mention that \eqref{ah2} contains the upper bound of Theorem \ref{t05}.

\begin{theorem}
\label{tAh1}
    Let $f:\D\to\ov{\D}$ be holomorphic such that $f(0)\neq0$ and $r\in(0,1)$. Then
    \begin{equation}
    \label{ah2}
        A_h(\Om(rf))\le \frac{\pi r^2|f(0)|^2}{1-r^2},\quad L_h(\p\Om(rf))\le \frac{2\pi r|f(0)|}{1-r^2}.
    \end{equation}
    and
    \begin{equation}
    \label{ah22}
        L_h^2(\p\Om(rf))\le \frac{4\pi }{1-r^2}A_h(\Om(rf)).
    \end{equation}
    Moreover, for each inequality, the equality holds if and only if $f\equiv\sigma\in\dt$.
\end{theorem}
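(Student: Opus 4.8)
The plan is to transfer all three hyperbolic functionals to the Mej\'ia--Pommerenke fixed point map and then quote the sharp estimates of Kourou \cite{Kou2}. As in the proof of Theorem \ref{tkr}, I would let $\psi:\D\to\Om(f)$ be the conformal fixed point function, so that $\psi(0)=0$, $\psi'(0)=f(0)$ and $\Om(rf)=\psi(r\D)$; since $\Om(f)$ is hyperbolically convex, $\psi$ is a hyperbolically convex map. As $A_h$ and $L_h$ are intrinsic to $\D$, we have $A_h(\Om(rf))=A_h(\psi(r\D))$ and $L_h(\p\Om(rf))=L_h(\psi(r\dt))$, so the statement becomes one about the hyperbolic area and perimeter of the image of the subdisc $r\D$ under a hyperbolically convex $\psi$ with $\psi(0)=0$ and $|\psi'(0)|=|f(0)|$. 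Writing these through the hyperbolic derivative (cf. Remark \ref{Dhj} and \eqref{ppp}),
\begin{equation*}
A_h(\psi(r\D))=\iint_{r\D}\frac{|D_{h1}\psi(w)|^2}{(1-|w|^2)^2}\,{\rm d}A(w),\qquad L_h(\psi(r\dt))=\frac{1}{1-r^2}\int_{r\dt}|D_{h1}\psi(w)|\,|{\rm d}w|,
\end{equation*}
makes transparent that both are controlled by the single normalization $|\psi'(0)|=|f(0)|$.

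With this reduction in hand, I would invoke Kourou's sharp upper bounds \cite{Kou2} for the hyperbolic area and perimeter of $\psi(r\D)$ for hyperbolically convex $\psi$, expressed in terms of $r$ and $|\psi'(0)|$; substituting $|\psi'(0)|=|f(0)|$ delivers the two inequalities in \eqref{ah2}. In each of these estimates the extremal is the rotation $\psi(w)=\sigma w$, $\sigma\in\dt$, for which $\psi(r\D)=r\D$.

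The inequality \eqref{ah22} is a reverse isoperimetric statement and cannot be deduced from the two bounds of \eqref{ah2}, both of which point the same way; I would therefore take it directly from Kourou's sharp estimate \cite{Kou2}. A natural ingredient toward such an estimate is the Cauchy--Schwarz bound
\begin{equation*}
L_h(\psi(r\dt))^2\le 2\pi r\,\frac{{\rm d}}{{\rm d}r}A_h(\psi(r\D)),
\end{equation*}
which holds for any conformal $\psi$ and is an equality for the disc; the remaining and genuinely harder step is to control the radial growth of the hyperbolic area by means of the hyperbolic convexity of $\psi$, rather than by a pointwise Schwarz--Pick bound, and I expect this reverse-isoperimetric step to be the main obstacle.

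Finally, for the equality discussion I would trace the extremal case through the transfer: equality in any of the three estimates forces $\psi$ to be a rotation, equivalently $\Om(f)=\D$, which by the defining property of the fixed point function holds precisely when $f\equiv\sigma\in\dt$. Conversely, for such $f$ one has $\Om(rf)=r\D$, and the computations $A_h(r\D)=\frac{\pi r^2}{1-r^2}$ and $L_h(r\dt)=\frac{2\pi r}{1-r^2}$ (as in the proof of Proposition \ref{p11}) show that all three inequalities become equalities.
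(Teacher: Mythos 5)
Your proposal is correct and follows essentially the same route as the paper: transfer $\Om(rf)=\psi(r\D)$ via the Mej\'ia--Pommerenke fixed point function and invoke Kourou's sharp estimates (the paper cites \cite[Corollaries 1.2, 1.3, 1.4]{Kou2} for all three inequalities, including \eqref{ah22}), with the equality case reduced to $\psi$ being a rotation and hence $f\equiv\sigma\in\dt$. The only difference is presentational: your extra remarks on how one might prove the reverse isoperimetric bound are not needed, since that bound is likewise quoted from \cite{Kou2}.
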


\begin{proof}
    Let $\psi:\D\to\Om(f)$ be the unique conformal map with $\psi(0)=0$ and $\psi'(0)=f(0)$. Then $\Om(rf)=\psi(r\D)$. The desired inequalities follow from \cite[Corollaries 1.2, 1.3, 1.4]{Kou2}. For each inequality, the equality holds  if and only if $\psi(z)=\sigma z, z\in\D$, for some $\sigma\in\dt$. Since $f(z)=\frac{z}{\psi^{-1}(z)}, z\in\Om(f)$ (by \cite[Section 3]{MejPom}), $f\equiv \sigma \in\dt$. 
\end{proof}

\begin{remark}
 The sharp upper bound for the Euclidean area of $\Om(f)$ was obtained by Mej\'ia and Pommerenke \cite[Theorem 3.1]{MejPom}. An upper bound for the Euclidean perimeter of $\Om(f)$ is $\pi^2$, in view of the general result of Flinn \cite[Theorem 3]{Fl}, for any hyperbolically convex subset of $\D$. Since $\Om(f)$ is hyperbolically convex, using the fixed point function given by \cite{MejPom} and the inequality \cite[Theorem 6.1]{MaMin3}, one can find some corresponding lower bounds.
\end{remark}

The following consequence of the isoperimetric-type inequality \eqref{ah22} combined with Proposition \ref{Lh_low} gives a sharp lower bound  for the hyperbolic area, which is also stated in Theorem \ref{t05}.

\begin{corollary}
    \label{Ah_low}
    Let $f:\D\to\ov{\D}$ be holomorphic such that $f(0)\neq0$ and $r\in(0,1)$. Then 
    $$A_h(\Om(rf))\ge\frac{\pi(1-r)r^2|f(0)|^2}{(1+r)((1+r)^2-4r|f(0)|^2)},$$
    with equality if and only if  $f\equiv\sigma\in\dt$.
\end{corollary}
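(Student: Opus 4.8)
The plan is to combine the sharp isoperimetric-type inequality \eqref{ah22} from Theorem \ref{tAh1} with the sharp lower bound for the hyperbolic perimeter from Proposition \ref{Lh_low}. Indeed, \eqref{ah22} rearranges to
\begin{equation*}
A_h(\Om(rf))\ge\frac{1-r^2}{4\pi}L_h^2(\p\Om(rf)),
\end{equation*}
so the strategy is simply to substitute the lower bound for $L_h(\p\Om(rf))$ into the right-hand side. First I would invoke Proposition \ref{Lh_low} to get
\begin{equation*}
L_h(\p\Om(rf))\ge\frac{2\pi r|f(0)|}{(1+r)\sqrt{(1+r)^2-4r|f(0)|^2}},
\end{equation*}
and since both sides are nonnegative, squaring preserves the inequality.

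Next I would carry out the algebraic simplification. Writing $\alpha=|f(0)|$ for brevity, one has
\begin{equation*}
A_h(\Om(rf))\ge\frac{1-r^2}{4\pi}\cdot\frac{4\pi^2 r^2\alpha^2}{(1+r)^2\big((1+r)^2-4r\alpha^2\big)}.
\end{equation*}
The factor $1-r^2=(1-r)(1+r)$ cancels one power of $(1+r)$ from the denominator, leaving
\begin{equation*}
A_h(\Om(rf))\ge\frac{\pi(1-r)r^2\alpha^2}{(1+r)\big((1+r)^2-4r\alpha^2\big)},
\end{equation*}
which is exactly the claimed bound once we restore $\alpha=|f(0)|$. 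This is the decisive computation and it is routine.

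For the equality case, I would track when each of the two input inequalities is sharp. The equality in \eqref{ah22} holds if and only if $f\equiv\sigma\in\dt$, and the equality in Proposition \ref{Lh_low} holds under the same condition. Since the chain of inequalities above is an equality precisely when both inputs are equalities, the extremal case for the combined bound is exactly $f\equiv\sigma\in\dt$. I expect no genuine obstacle here; the only subtlety worth a moment's care is confirming that the two extremizers coincide so that the final equality statement is consistent — but both Theorem \ref{tAh1} and Proposition \ref{Lh_low} already isolate the same family $f\equiv\sigma\in\dt$, so the equality characterization transfers directly. (One should also note the minor typographical point that the statement writes $A_h(\p\Om(rf))$ where $A_h(\Om(rf))$ is meant, the hyperbolic area being associated to the domain, not its boundary.)
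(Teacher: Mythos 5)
Your argument is exactly the paper's: the corollary is stated there as a direct consequence of the isoperimetric inequality \eqref{ah22} combined with Proposition \ref{Lh_low}, and your algebra and equality analysis (equality in the chain forces equality in \eqref{ah22}, hence $f\equiv\sigma\in\dt$, with the converse checked on $\Om(r\sigma)=r\D$) carry this out correctly. No issues.
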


The following consequence of Theorem \ref{tAh1} and Corollary \ref{Ah_low} combined with \eqref{GB} gives sharp bounds for the hyperbolic total curvature. 

\begin{corollary}
\label{kh_bnds}
    Let $f:\D\to\ov{\D}$ be holomorphic such that $f(0)\neq0$ and $r\in(0,1)$. Then 
    $$2\pi+\frac{4\pi(1-r)r^2|f(0)|^2}{(1+r)((1+r)^2-4r|f(0)|^2)}\le k_h(\p\Om(rf))\le 2\pi+\frac{4\pi r^2|f(0)|^2}{1-r^2}.$$
    Moreover, for each inequality, the equality holds if and only if  $f\equiv\sigma\in\dt$.
\end{corollary}

\begin{remark}
The proof of the upper bound of Corollary \ref{kh_bnds} implies \cite[Corollary 1.1]{Kou1}. Indeed, if  $\psi:\D\to\D$ is hyperbolically convex, then, by \cite[Corollary 1.4]{Kou2} and \eqref{GB}, 
$$k_h(\psi(r\dt))\le 2\pi + \frac{|\psi'(0)|^2}{(1-|\psi(0)|^2)^2}\frac{4\pi r^2}{1-r^2}.$$
This upper bound is less or equal than $\frac{2\pi(1+r^2)}{1-r^2}$, by the Schwarz-Pick Lemma.
\end{remark}

The following sharp hyperbolic isoperimetric inequality follows from   \eqref{isop} and Corollary \ref{Ah_low}.

\begin{corollary}
    Let $f:\D\to\ov{\D}$ be holomorphic such that $f(0)\neq0$ and $r\in(0,1)$. Then 
    \begin{equation}
\label{AhLh}
L_h^2(\p\Om(rf)) \ge 4\pi \left(1+\frac{(1-r)r^2|f(0)|^2}{(1+r)((1+r)^2-4r|f(0)|^2)}\right) A_h(\Om(rf)),
\end{equation}
    with equality if and only if  $\Om(rf)=r\dt$.
\end{corollary}

\begin{proof}
\eqref{isop} and Corollary \ref{Ah_low} imply
\begin{align*}
L_h^2(\p\Om(rf)) &\ge 4\pi \left(1+\frac{A_h(\Om(rf))}{\pi}\right) A_h(\Om(rf)) \\
&\ge 4\pi \left(1+\frac{(1-r)r^2|f(0)|^2}{(1+r)((1+r)^2-4r|f(0)|^2)}\right) A_h(\Om(rf)). 
\end{align*}
The equality in the second inequality holds if and only if $f\equiv\sigma\in\dt$, which is equivalent to $\p\Om(rf)=r\dt$. Finally, we note that the equality in the first inequality holds if and only if $\p\Om(rf)$ is a circle.
\end{proof}

\section*{Acknowledgements}
The authors thank the anonymous referee for a careful reading of the manuscript and helpful comments.

\end{document}